\newcommand{\al}[2]{c_{#1}(#2)}
\newcommand{\bigL}[2]{\mathcal{L}(#1,#2)}
\newcommand{\binomial}[2]{{{#1}\choose{#2}}}
\newcommand{\C}{\mathcal{C}}
\newcommand{\CC}{\mathbb{C}}
\newcommand{\pihat}{\hat{\pi}}
\renewcommand{\S}{\mathcal{S}}
\DeclareMathOperator{\des}{des}
\DeclareMathOperator{\Des}{Des}
\DeclareMathOperator{\PPat}{\Pi}
\DeclareMathOperator{\SYT}{SYT}
\theoremstyle{plain}
\newtheorem{theorem}{Theorem}[section]
\newtheorem{lemma}[theorem]{Lemma}
\newtheorem{proposition}[theorem]{Proposition}
\newtheorem{corollary}[theorem]{Corollary}
\theoremstyle{definition}
\newtheorem{remark}[theorem]{Remark}
\newtheorem{sample result}[theorem]{Sample Result}
\title{Descents of $\lambda$-unimodal cycles in a character formula}
\author{Kassie Archer}
\date{}
\begin{document}
\maketitle

\begin{abstract}
	We prove an identity conjectured by Adin and Roichman involving the descent set of $\lambda$-unimodal 
	cyclic permutations. These permutations appear in formulas for characters of certain representations of the 
	symmetric group. Such formulas have previously been proven algebraically. In this paper, we present a 
	combinatorial proof for one such formula and discuss the consequences for the distribution of the descent
	set on cyclic permutations.
\end{abstract}

\section{Introduction}

Given a composition $\lambda = (\lambda_1, \lambda_2, \ldots, \lambda_k)$ of $n$, we say a permutation
is \textit{$\lambda$-unimodal} if the permutation, written in its one-line notation, is the concatenation of unimodal
 segments of length $\lambda_i$. (See Figure \ref{Fig:1} for an example.) 
   These
permutations and their descent sets appear in the formulas for certain characters of representations of the 
symmetric group \cite{char, arc, note}. These formulas are of the same form found in Theorem \ref{thm:main}, 
where the sum extends over $\lambda$-unimodal permutations with some extra property which varies based 
on the character. Known formulas for characters include sums over $\lambda$-unimodal permutations which 
are involutions, are in some given Knuth class, or have a given Coxeter length \cite{char}. 

We prove a formula of this type originally conjectured by Ron Adin and Yuval Roichman \cite{yuv}. Suppose 
$\chi$ is the character of the representation of $\S_n$ induced from a primitive linear representation on a 
cyclic subgroup generated by an $n$-cycle. Let $\chi_\lambda$ be its value on the conjugacy class of type 
$\lambda$. Denote by $S(\lambda)$ the set of partial sums 
$\{\lambda_1, \lambda_1+\lambda_2, \ldots, \lambda_1 + \cdots + \lambda_k\}$ and by $\C_\lambda$ the set 
of $\lambda$-unimodal permutations which are also \textit{cyclic}, meaning they can be written in cycle-notation as a single $n$-cycle. 
Theorem \ref{thm:main}, which we will prove in Section \ref{sec:pf}, 
is the main result of this paper.

\begin{theorem}\label{thm:main} 
	For every composition $\lambda$,
	\begin{equation}
	\label{eq:main}
		\chi_\lambda = \sum_{\pi \in \C_\lambda} (-1)^{|\Des(\pi) \setminus S(\lambda)|}.
	\end{equation}
\end{theorem}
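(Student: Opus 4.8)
The plan is to reduce the assertion to a purely combinatorial identity about $\C(\lambda)$ and then prove that identity by a sign-reversing involution supplemented by a count of its fixed points. \emph{Reduction.} The character $\chi$ here is the Lie character $\mathrm{Ind}_{C_n}^{\S_n}\omega$, whose Frobenius characteristic is $\ell_n=\tfrac1n\sum_{d\mid n}\mu(d)\,p_d^{n/d}$ with $\mu$ the classical M\"obius function. Extracting the coefficient of $p_\lambda$ gives the known values (those proved algebraically in the cited works): $\chi_\lambda=0$ unless every part of $\lambda$ equals a single divisor $d$ of $n$, in which case $\chi_\lambda=\mu(d)\,d^{\,n/d-1}(n/d-1)!$. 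So it suffices to show that $f(\lambda):=\sum_{\pi\in\C(\lambda)}(-1)^{|\Des(\pi)\setminus S(\lambda)|}$ equals this number; note that this forces $f(\lambda)$ to depend only on the underlying partition of $\lambda$, which is not visible from the definition and which the proof must deliver — most economically by showing $f(\lambda)=0$ for every non-rectangular $\lambda$. Since $\Des(\pi)\subseteq[n-1]$, a descent of $\pi\in\C(\lambda)$ fails to lie in $S(\lambda)$ exactly when it sits strictly inside one of the unimodal blocks of $\pi$; calling these the \emph{internal} descents, $f(\lambda)=\sum_{\pi\in\C(\lambda)}(-1)^{\#\{\text{internal descents of }\pi\}}$.

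\emph{Local cancellation and the obstacle.} On a single value set $T$ with $|T|\ge 2$, the unimodal arrangements of $T$ correspond to the subsets of $T\setminus\{\max T\}$ — that subset being placed, decreasing, after the peak $\max T$ — and the number of internal descents equals the size of that subset, so the signed sum over all unimodal arrangements of $T$ vanishes, witnessed by the involution ``toggle whether $\min T$ lies after the peak.'' Applied block by block this would give $f(\lambda)=0$ whenever some $\lambda_i\ge2$ and $f((1^n))=(n-1)!=\chi_{(1^n)}$ otherwise — \emph{if} cyclicity could be ignored. It cannot: the toggle multiplies $\pi$ by a transposition, splitting the $n$-cycle into two cycles and leaving $\C(\lambda)$. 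The heart of the argument is therefore to build a genuine sign-reversing involution \emph{on $\C(\lambda)$} that flips the parity of the number of internal descents while keeping $\pi$ an $n$-cycle. My approach would be to pass to the cycle-word (necklace) model of an $n$-cycle, where $\lambda$-unimodality and the internal descents become local features of the cyclic word and an exchange at a canonically chosen ``active'' site — e.g.\ transposing two cyclically adjacent beads with consecutive labels precisely when this changes the internal-descent count — again produces a cyclic word, hence an $n$-cycle, with the parity reversed; one then acts at the first active site.

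\emph{Fixed points and the M\"obius count.} The fixed points are the ``rigid'' elements admitting no active site. I expect these to be exactly the rotationally periodic cyclic words compatible with the block pattern, which can occur only when $\lambda$ is a rectangle $(d^{n/d})$, with the period governed by $d$; this is what yields $f(\lambda)=0$ for non-rectangular $\lambda$. In the rectangular case one enumerates the survivors: because periodicity compatible with $(d^{n/d})$ can be realized at any sub-period corresponding to a divisor of $d$, a M\"obius inversion over the divisors of $d$ (the standard necklace count, in the spirit of Gessel--Reutenauer) supplies the factor $\mu(d)$ and collapses the remaining enumeration to $d^{\,n/d-1}(n/d-1)!$, all surviving terms carrying the common sign $\mathrm{sgn}\,\mu(d)$. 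Matching this against $\chi_{(d^{n/d})}=\mu(d)\,d^{\,n/d-1}(n/d-1)!$ finishes the proof.

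The step I expect to fight hardest with is the construction of the involution: producing one reversible exchange that simultaneously preserves the $n$-cycle property and flips the internal-descent parity, and checking it is well defined throughout $\C(\lambda)$ with exactly the claimed rigid fixed points. Everything else — the reduction to $f(\lambda)$, the block-wise sign computation, and the final M\"obius-over-divisors count — is then routine bookkeeping.
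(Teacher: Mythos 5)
There is a genuine gap: the entire proof hinges on a sign-reversing involution on $\mathcal{C}(\lambda)$ (or on the cyclic-word model) that you do not construct, and which you yourself flag as the step you ``expect to fight hardest with.'' Your sketch of its fixed-point structure is also not believable as stated. First, the target value $\mu(d)\,d^{k-1}(k-1)!$ vanishes when $d$ is not squarefree (e.g.\ $\lambda=(4^k)$) and is negative when $\mu(d)=-1$, so the surviving objects cannot simply be ``rigid periodic words all carrying the common sign $\mathrm{sgn}\,\mu(d)$''; even after the involution you would still need signed cancellation among the would-be fixed points, i.e.\ a second nontrivial argument. Second, the dictionary you invoke between internal descents of a cyclic permutation and ``local features of the cyclic word'' is more delicate than you assume: in the paper's necklace model the map from words to $\lambda$-unimodal cycles is a surjection, not a bijection, with a permutation having $|\Des(\pi)\setminus S(\lambda)|=m-j$ pulled back by exactly $\binom{k}{j}$ necklaces with $m$ odd letters (the peak of each unimodal block can be assigned to either the increasing or decreasing side), and when $\gcd(\lambda_1,\dots,\lambda_k)$ is even one must also admit $2$-periodic words $q^2$ with $o(q)$ odd. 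Any involution defined ``bead-locally'' on words must be shown to descend coherently through this many-to-one correspondence, and nothing in your plan addresses that.

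For comparison, the paper does not use an involution at all: it transfers the signed sum to necklaces via the surjection $\PPat_\lambda$ and Corollary \ref{cor:Nlambda}, inverts the resulting binomial relation (Lemma \ref{lem:2}), and then evaluates the alternating sum using the explicit primitive-necklace count \eqref{eq:L} (M\"obius over divisors of the gcd), the symmetry of Lemma \ref{lem:bigL}, and the binomial identities of Lemmas \ref{lem:bincoef}--\ref{lem:long}, with a separate and genuinely different computation when $d$ is even (Theorem \ref{thm:deven}) to handle the $2$-periodic words. In other words, the ``M\"obius-over-divisors count'' you describe as routine bookkeeping is, in the paper, the actual proof, and it replaces rather than supplements the missing involution. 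Your reduction of $\chi_\lambda$ to the explicit values (via the Lie-character/Frobenius characteristic, versus the paper's direct induced-character computation in Proposition \ref{prop:ind}) is fine, but as it stands the combinatorial core of your argument is a plan, not a proof.
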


It is an simple exercise to show (see Proposition \ref{prop:ind}) that the character described above takes the 
following values. 
\begin{equation}
\label{eq:chi}
	\chi_\lambda = \begin{cases} (k-1)! d^{k-1} \mu(d) & \text{if } \lambda = (d^k) \\ 0 & \text{otherwise} \end{cases}
\end{equation}
where $\mu$ denotes the number-theoretic M\"{o}bius function, which takes values $\mu(d) = (-1)^t$ when $d$ is the square-free product of $t$ distinct primes, and $\mu(d) = 0$ otherwise. 

Previously, these types of character formulas have been proven 
algebraically \cite{Roi1, Roi2, Roi3}. Here, we prove Theorem \ref{thm:main} using combinatorial methods by showing 
that the sum on the right hand side of Equation \eqref{eq:main} takes the same values as $\chi_\lambda$ in Equation \eqref{eq:chi}. To do this, 
we use a relationship between $\lambda$-unimodal permutations and primitive words developed in \cite{archer}.

In Section \ref{sec:background}, we introduce necessary background information and previously known results. In Section \ref{sec:necklaces}, we introduce a correspondence between necklaces and permutations adapted from \cite{archer}. In Section \ref{sec:pf}, we present the proof of the main theorem. 

In Section \ref{sec:cons}, we will see that Theorem \ref{thm:main} implies interesting results about the distribution 
of the descent set on $\C_n$, the set of cyclic permutations. For example, the descent sets of elements of $\C_n$ are equi-distributed with the 
descent sets of the standard Young tableaux that form a basis to the representation described above. 
Additionally, the number of permutations of $\S_{n-1}$ with descent set $D$ is equal to the number of permutations 
of $\C_{n}$ whose descent set is either $D$ or $D \cup [n-1]$. Different proofs of these consequences can 
alternatively be found in \cite{note} and \cite{sergi}, respectively.

Finally, there is an interesting special case of Theorem \ref{thm:main} when $\lambda = (n)$. This gives the result
$$ \sum_{\substack{\pi \in \C_n \\ \text{unimodal}}} (-1)^{\des(\pi)} = \mu(n),$$ of which the author knows of no other proof.

\section{Background}\label{sec:background}

\subsection{Definitions and notation}
Let $\S_n$ denote the set of permutations on $[n] = \{1,2,\ldots, n\}$. We write permutations in their one-line notation as 
$\pi = \pi_1\pi_2\cdots \pi_n$. A \emph{cyclic} permutation is a permutation $\pi \in \S_n$ which can be written in its cycle notation as a single $n$-cycle. For example, the permutation $\pi= 36578124$ is cyclic since it can be written in cycle notation as $\pi = (13584726)$. We denote the set of cyclic permutations of size $n$ by $\C_n$. 

  We say that a word $x_1 x_2 \cdots x_n$ is \emph{unimodal} if  there is some 
$m$, with $1\leq m \leq n$, for which 
\[
	x_1<x_2 < \cdots< x_m>x_{m+1}> \cdots >x_n.
\]
That is, the word is increasing, then decreasing. For example, the word $367841$ is unimodal. 

A \emph{composition of $n$} is an ordered list of non-negative integers $\lambda = (\lambda_1, \lambda_2, \ldots,\lambda_k)$ so that 
$\lambda_1+ \lambda_2+\cdots + \lambda_k = n$. For the remainder of the paper, we will assume that $\lambda$ 
is a composition of $n$ of length $k$. We say a permutation $\pi \in \S_n$ is 
\emph{$\lambda$-unimodal}\footnote{In \cite{char}, these permutations are called \emph{$\mu$-unimodal permutations}. 
In this paper, we use $\lambda$ for the composition and reserve $\mu$ for the M\"{o}bius function. Additionally, in \cite{char}, these permutations are defined slightly differently, where unimodal is taken to mean decreasing, then increasing and thus the permutations in this paper are exactly the complements of the permutations in \cite{char} (See, for example, Prop 2.7 of \cite{archer}). } if when one 
breaks $\pi$ into contiguous segments of lengths $\lambda_i$, each segment is unimodal. That is, defining the partial 
sums of $\lambda$ by $s_i(\lambda) = \lambda_1 + \lambda_2 + \cdots + \lambda_i$,  the segment 
$\pi_{s_{i-1}(\lambda) + 1} \ldots \pi_{s_{i}(\lambda)}$ of $\pi$ is unimodal for all $1\leq i \leq k$.  See Figure \ref{Fig:1} for an example of a $\lambda$-unimodal permutation.

\begin{figure}
\centering
\begin{tikzpicture}[scale = .85]
\draw [fill = white] (0,0) rectangle (9,9);
\draw[help lines, line width = 1.5pt, color = black] (0,0) grid (9,9);
\draw [line width = 2pt, color = blue!50!black] (0,0) -- (1,1);
\draw [line width = 2pt, color = blue!50!black] (0,1) -- (1,0);
\draw [line width = 2pt, color = blue!50!black] (1,3) -- (2,4);
\draw [line width = 2pt, color = blue!50!black] (1,4) -- (2,3);
\draw [line width = 2pt, color = blue!50!black] (2,8) -- (3,9);
\draw [line width = 2pt, color = blue!50!black] (2,9) -- (3,8);
\draw [line width = 2pt, color = blue!50!black] (3,6) -- (4,7);
\draw [line width = 2pt, color = blue!50!black] (3,7) -- (4,6);
\draw [line width = 2pt, color = blue!50!black] (4,4) -- (5,5);
\draw [line width = 2pt, color = blue!50!black] (4,5) -- (5,4);
\draw [line width = 2pt, color = blue!50!black] (5,2) -- (6,3);
\draw [line width = 2pt, color = blue!50!black] (5,3) -- (6,2);
\draw [line width = 2pt, color = blue!50!black] (6,1) -- (7,2);
\draw [line width = 2pt, color = blue!50!black] (6,2) -- (7,1);
\draw [line width = 2pt, color = blue!50!black] (7,7) -- (8,8);
\draw [line width = 2pt, color = blue!50!black] (7,8) -- (8,7);
\draw [line width = 2pt, color = blue!50!black] (8,5) -- (9,6);
\draw [line width = 2pt, color = blue!50!black] (8,6) -- (9,5);
\end{tikzpicture}
\caption{The 
permutation $149753286$ is $(6,3)$-unimodal since $149753$ and $286$ are unimodal segments of lengths 
6 and 3, respectively. The choice of $\lambda=(6,3)$ is not unique here. For example, this permutation is also $(7,2)$-unimodal and $(6, 2, 1)$-unimodal.}
\label{Fig:1}
\end{figure}
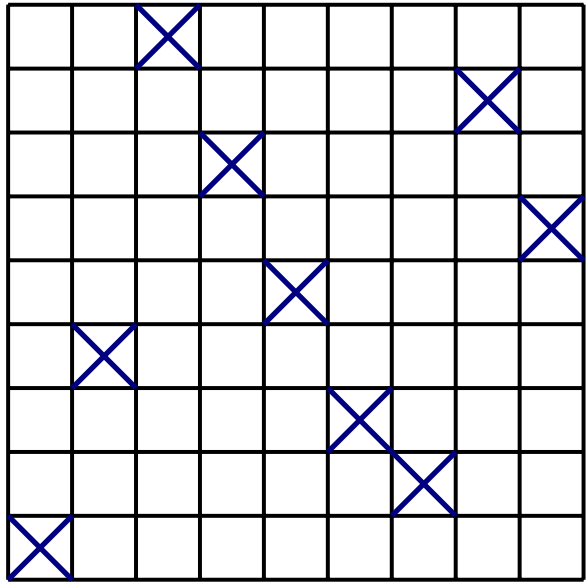
 
A \emph{word} of length $n$ on $m$ letters is a string $s = s_1s_2\ldots s_n$ where $s_i \in \{0,1,\ldots, m-1\}$. 
A \emph{necklace} of length $n$ on $m$ letters is an equivalence class of words $[s]$ so that 
$t = t_1t_2\ldots t_n \sim s = s_1s_2\ldots s_n$ if and only if $t_1t_2\ldots t_n = s_is_{i+1} \ldots s_ns_1\ldots s_{i-1}$ 
for some $1\leq i \leq n$, that is, $t$ is some cyclic rotation of $s$. For example, $1101 \sim 1011\sim 0111\sim 1110$. 
Denote by $W_m(n)$ the set of words of length $n$ on $m$ letters and by $N_m(n)$ the set of necklaces of length 
$n$ on $m$ letters.
 
We call a word $s$ (or a necklace $[s]$) \emph{primitive} if there is no strictly smaller word $q$ so that $s = q^r$ 
for some $r>1$, where $q^r$ denotes the concatenation of $q$ with itself $r$ times. We denote the number of 
primitive necklaces of length $n$ on $m$ letters by $L_m(n)$. We let $a_t(s) = |\{j \in [n]  : s_j = t\}|$, that is the 
number of copies of $t$ in word $s$ and we let $o(s) = \sum_{\text{odd } t} a_t(s)$, that is the number of odd 
letters in word $s$. We denote by $L(a_0, a_1, \ldots, a_{m-1})$ the size of the set of primitive necklaces $[s]$ so 
that $a_t(s) = a_t$. The enumeration of this set where $\sum_i a_i = n$ is well-known. 

\begin{lemma} 
	For $n\geq 1$, suppose that $a_0+ a_1+ \cdots a_{m-1} = n$ and that $a_i\geq 0$ for all $1\leq i \leq m$. Then,
	\begin{equation}
	\label{eq:L}
		L(a_0, a_1\ldots, a_{m-1}) = 
		\frac{1}{n}\sum_{\ell \mid\gcd(a_0, \ldots, a_{m-1})}
		\mu(\ell) \frac{(n/\ell)!}{(\frac{a_0}{\ell})!(\frac{a_1}{\ell})! \cdots(\frac{a_{m-1}}{\ell})!}.
	\end{equation}
\end{lemma}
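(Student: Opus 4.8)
The plan is to prove the formula for $L(a_1, \ldots, a_m)$ by a standard M\"{o}bius-inversion argument on the lattice of divisors, exactly parallel to the classical derivation of the necklace-counting function. First I would introduce the multinomial coefficient $M(a_1, \ldots, a_m) = \binom{n}{a_1, a_2, \ldots, a_m} = \frac{n!}{a_1! a_2! \cdots a_m!}$, which counts the total number of \emph{words} (not necklaces) of length $n$ with exactly $a_t$ copies of the letter $t$ for each $t$. The key structural observation is that the cyclic group $\Z/n\Z$ acts on this set of words by rotation, and every word $s$ lies in an orbit whose size is exactly $n/p$, where $p$ is the period of $s$ (the length of the primitive word $q$ with $s = q^{n/p}$, so $p \mid n$). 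Crucially, a word of period $p$ exists among words with content $(a_1, \ldots, a_m)$ only when $p$ divides every $a_t$ (equivalently $(n/p) \mid \gcd(a_1, \ldots, a_m)$ fails to be the right phrasing --- rather, writing $d = n/p$, we need $d \mid a_t$ for all $t$), and in that case its ``reduced'' word $q$ has content $(a_1 d/n, \ldots, a_m d/n)$ wait, let me restate: if $s = q^r$ with $q$ primitive of length $n/r$, then $q$ has content $(a_1/r, \ldots, a_m/r)$, so we need $r \mid \gcd(a_1, \ldots, a_m)$.

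The second step is to set up the counting identity. Each word $s$ with content $(a_1, \ldots, a_m)$ can be written uniquely as $s = q^r$ with $q$ \emph{primitive}; here $r$ ranges over divisors of $\gcd(a_1, \ldots, a_m)$, and $q$ is a primitive word of length $n/r$ with content $(a_1/r, \ldots, a_m/r)$. Grouping words by the number of distinct cyclic rotations of $q$ --- which for primitive $q$ of length $n/r$ is exactly $n/r$ --- and recalling that the number of primitive \emph{necklaces} with a given content $(b_1, \ldots, b_m)$ summing to $N$ is $L(b_1, \ldots, b_m)$ while the number of primitive \emph{words} with that content is $N \cdot L(b_1, \ldots, b_m)$, I obtain the identity
\begin{equation*}
	M(a_1, \ldots, a_m) = \sum_{r \mid \gcd(a_1, \ldots, a_m)} \frac{n}{r} \cdot L\!\left(\frac{a_1}{r}, \ldots, \frac{a_m}{r}\right).
\end{equation*}
Equivalently, setting $f(a_1, \ldots, a_m) = \frac{1}{n} M(a_1, \ldots, a_m)$ and $g(a_1, \ldots, a_m) = L(a_1, \ldots, a_m)$, this reads $f(a_1, \ldots, a_m) = \sum_{r \mid \gcd} g(a_1/r, \ldots, a_m/r)$.

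The third step is M\"{o}bius inversion. The relation above is a divisor-sum relation in the single variable $r$ ranging over divisors of $g := \gcd(a_1, \ldots, a_m)$, so classical M\"{o}bius inversion on $\Z_{>0}$ applies directly: writing $F(j) = \frac{1}{n/j}\cdot M(a_1/j, \ldots, a_m/j)\cdot\frac{n/j}{?}$ --- more cleanly, define for each $j \mid g$ the quantity $\Phi(j) = \tfrac{1}{n/j} \binom{n/j}{a_1/j, \ldots, a_m/j} \cdot \tfrac{n}{j}$; then one checks $\Phi(j) = \sum_{i \mid (g/j)} L(\ldots)$ and inverts to get $L(a_1,\ldots,a_m) = \sum_{\ell \mid g} \mu(\ell)\, \Phi(\ell)$, which after substituting $n/\ell$ for the numerator factor is precisely the claimed Equation \eqref{eq:L}. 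I would spell out the bookkeeping so that the factor $\frac{1}{n}$ and the ratio $\frac{(n/\ell)!}{(a_1/\ell)! \cdots (a_m/\ell)!}$ emerge correctly.

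I expect the main obstacle to be purely notational rather than conceptual: keeping the indices straight when passing between ``words'' and ``necklaces'' (a factor of orbit-size $n/r$), and correctly identifying the variable over which M\"{o}bius inversion is performed --- it is the single integer $r \mid \gcd(a_1, \ldots, a_m)$, not a multivariate divisibility. One must also verify the edge hypothesis $a_i \geq 1$ is exactly what guarantees the base term ($r = 1$, $\ell = 1$) is genuinely a word on all $m$ letters, so that $L$ on the left is the honest count; degenerate cases where some $a_i = 0$ would reduce $m$ and are excluded. Since this is a classical result (it is, after all, stated as ``well-known''), I would likely keep the proof brief, cite the Witt/necklace-counting literature, and present the inversion in a few lines.
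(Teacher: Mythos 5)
The paper gives no proof of this lemma at all --- it is invoked as the well-known (Witt-type) enumeration of primitive necklaces with prescribed content --- so there is nothing internal to compare against. Your outline is the standard argument and the right one: decompose each word uniquely as $s=q^r$ with $q$ primitive, observe that $r$ must divide $\gcd(a_1,\ldots,a_m)$ and that $q$ has content $(a_1/r,\ldots,a_m/r)$, convert between primitive necklaces and primitive words via the orbit-size factor, and apply M\"obius inversion over the divisors of the gcd.

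However, the normalization in your ``equivalently'' line is wrong as written, and the $\Phi(j)$ bookkeeping in your third step inherits the confusion (the stray ``?''). From $\frac{n!}{a_1!\cdots a_m!}=\sum_{r\mid\gcd}\frac{n}{r}\,L(a_1/r,\ldots,a_m/r)$, dividing by $n$ gives $\sum_{r}\frac{1}{r}L(a_1/r,\ldots,a_m/r)$, not $\sum_{r}L(a_1/r,\ldots,a_m/r)$; so with $f=M/n$ and $g=L$ the relation $f=\sum_{r\mid\gcd}g(\cdot/r)$ is false, and inverting it would not yield Equation \eqref{eq:L}. The clean fix is to invert at the level of primitive \emph{words}: let $P(b_1,\ldots,b_m)$ denote the number of primitive words with content $(b_1,\ldots,b_m)$, so that $P(b_1,\ldots,b_m)=N\cdot L(b_1,\ldots,b_m)$ where $N=\sum_t b_t$ (a primitive necklace of length $N$ has exactly $N$ distinct rotations). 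The unique decomposition $s=q^r$ then gives the factor-free relation $\frac{n!}{a_1!\cdots a_m!}=\sum_{r\mid\gcd(a_1,\ldots,a_m)}P(a_1/r,\ldots,a_m/r)$, valid for every content vector; M\"obius inversion (using $\gcd(a_1/\ell,\ldots,a_m/\ell)=\gcd(a_1,\ldots,a_m)/\ell$) gives $P(a_1,\ldots,a_m)=\sum_{\ell\mid\gcd}\mu(\ell)\,\frac{(n/\ell)!}{(a_1/\ell)!\cdots(a_m/\ell)!}$, and dividing by $n$ is exactly \eqref{eq:L}. With that adjustment your argument is complete; the $a_i\geq 1$ hypothesis plays no real role beyond fixing the alphabet.
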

 
The $\lambda$-unimodal permutations are special members of the alternating juxtaposition class, that is, permutations which are composed of $k$ segments that alternate increasing and decreasing. In \cite{archer}, the authors use infinite periodic words (equivalently necklaces) to obtain permutations in a given juxtaposition class via maps called signed shifts. Using these methods as motivation, we will define a set of necklaces $N_\lambda$ and a map $\PPat_\lambda$ to obtain $\lambda$-unimodal permutations from necklaces.

Define the set $N_\lambda \subseteq N_{2k}(n)$ to be the set of necklaces $[s]$ so that 
$a_{2t}(s) + a_{2t+1}(s) = \lambda_{t+1}$ for all $0\leq t \leq k-1$ and so that $[s]$ is either primitive or $s = q^2$ 
for some primitive word $q$ so that $o(q)$ is odd. For example, the word $s = 00121\in N_{(4,1)}$ since 
$a_0(s) + a_1(s) = 2+2 = 4$ and $a_2(s) + a_3(s) = 1+0 = 1$ and $s$ is primitive. For another example, 
$t = 0213302133$ is in $N_{(4,6)}$ since $a_0(t) + a_1(t) = 2+2 = 4$ and $a_2(t) + a_3(t) = 2+4 = 6$ and also 
$t = (02133)^2$ where $o(02133) = 3$ and $02133$ is primitive.

Let $N_\lambda^{(m)}$ be the set of elements $[s] \in N_\lambda$ where $o(s) = m$. Denote by 
$\bigL{\lambda}{m}$ the number of primitive necklaces in $N_\lambda^{(m)}$. We will often use the notation $d = \gcd(\lambda_1, \ldots, \lambda_k)$. If $2\mid d$, let $\lambda/2$ denote the composition of $n/2$ obtained by dividing each part of $\lambda$ by 2.

\begin{lemma} 
\label{lem:N=bigL}
	For $m, n \geq 1$ and $d = \gcd(\lambda_1, \ldots, \lambda_k)$. 
	\[
		\bigl| N_\lambda^{(m)} \bigr| = \begin{cases} \bigL{\lambda}{m} + \bigL{\lambda/2}{m/2} & 
		\text{if } d \text{ is even and } m \equiv 2\bmod 4 \\ \bigL{\lambda}{m} & \text{otherwise.}\end{cases} 
	\]
\end{lemma}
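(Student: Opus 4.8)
The plan is to partition $N_\lambda^{(m)}$ according to primitivity and count the two pieces separately. By the definition of $\bigL{\lambda}{m}$, the primitive necklaces in $N_\lambda^{(m)}$ number exactly $\bigL{\lambda}{m}$, so it suffices to show that the number of non-primitive necklaces in $N_\lambda^{(m)}$ equals $\bigL{\lambda/2}{m/2}$ when $d$ is even and $m\equiv 2\pmod 4$, and is $0$ otherwise. By the definition of $N_\lambda$, every non-primitive $[s]\in N_\lambda$ has the form $[s]=[q^2]$ for some primitive word $q$ with $o(q)$ odd, so I would first record this reduction and then analyze which such squares can occur.

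Since $a_j(q^2)=2a_j(q)$ for every letter $j$, the membership condition $a_{2t}(q^2)+a_{2t+1}(q^2)=\lambda_{t+1}$ forces every $\lambda_{t+1}$ to be even, hence $d=\gcd(\lambda_1,\dots,\lambda_k)$ to be even, and then $q$ must satisfy $a_{2t}(q)+a_{2t+1}(q)=\lambda_{t+1}/2$, i.e. $q$ is a word of "type $\lambda/2$''. Likewise $o(q^2)=2o(q)$, so $o(q^2)=m$ forces $m=2o(q)$ with $o(q)$ odd, that is, $m\equiv 2\pmod 4$. This already yields the ``otherwise'' case: if $d$ is odd or $m\not\equiv 2\pmod 4$, then $N_\lambda^{(m)}$ has no non-primitive elements and $|N_\lambda^{(m)}|=\bigL{\lambda}{m}$.

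For the remaining case ($d$ even, $m\equiv 2\pmod 4$), I would exhibit a bijection $[q]\mapsto[q^2]$ from the set of primitive necklaces of type $\lambda/2$ with $o(q)=m/2$ onto the set of non-primitive necklaces in $N_\lambda^{(m)}$. Well-definedness follows from the identity $\sigma^i(q)^2=\sigma^i(q^2)$ for a cyclic shift $\sigma$ (so rotating $q$ does not change the necklace $[q^2]$), and surjectivity follows from the preceding paragraph together with the observation that here $o(q)=m/2$ is automatically odd, so the clause ``$o(q)$ odd'' in the definition of $N_\lambda$ is vacuous. The source set is then precisely the set of primitive necklaces in $N_{\lambda/2}^{(m/2)}$, of cardinality $\bigL{\lambda/2}{m/2}$, and adding the $\bigL{\lambda}{m}$ primitive necklaces of $N_\lambda^{(m)}$ completes the count.

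The main obstacle is the injectivity of the squaring map $[q]\mapsto[q^2]$, i.e. recovering $[q]$ from $[q^2]$. I would handle this by first showing that for a primitive word $q$ of length $\ell$ the word $q^2$ has minimal period exactly $\ell$: if $q^2$ had a period $p<\ell$, then since $2\ell\geq p+\ell-\gcd(p,\ell)$ the Fine–Wilf theorem would give $q^2$ the period $\gcd(p,\ell)\mid\ell$ with $\gcd(p,\ell)<\ell$, contradicting primitivity of $q$. Consequently, if $[q^2]=[p^2]$ with $q,p$ both primitive (hence of the same length $\ell$), then $p^2=\sigma^i(q^2)=\sigma^i(q)^2$ for some $0\le i<\ell$, and comparing length-$\ell$ prefixes gives $p=\sigma^i(q)$, so $[p]=[q]$.
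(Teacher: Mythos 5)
Your proposal is correct and takes essentially the same route as the paper: both identify the non-primitive elements of $N_\lambda^{(m)}$ as exactly the necklaces $[q^2]$ with $q$ primitive of type $\lambda/2$ and $o(q)=m/2$ odd, which forces $d$ even and $m \equiv 2 \pmod 4$, and count them by the correspondence $[q]\mapsto[q^2]$ with the set counted by $\bigL{\lambda/2}{m/2}$. Your Fine--Wilf argument just makes explicit the injectivity of that correspondence, which the paper's shorter proof leaves implicit (and which also follows directly from your prefix comparison, without needing the minimal-period claim).
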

\begin{proof}
	The elements of $N_\lambda^{(m)}$ which are not primitive are exactly those of the form $[s]$ where $s = q^2$ 
	and $q$ is primitive with $o(q) = m/2$ odd. Therefore, we must have that $n$ and $d$ are even and $m \equiv 2 \bmod 4.$ 
	Conversely, if $n$ and $d$ are even and $m \equiv 2 \bmod 4$, given any primitive word $q$ of length $n/2$ with $a_{2t}(q) + a_{2t+1}(q) = \lambda_{t+1}/2$ 
	and $o(q) = m/2$, we will have $[q^2] \in N_\lambda^{(m)}.$
\end{proof}

Notice that, by definition, we can write $\bigL{\lambda}{m}$ in the following way:
\begin{equation}
\label{eq:bigL}
	\bigL{\lambda}{m} =  \sum_{\substack{\sum_t i_t = m \\ 0\le i_t\leq \lambda_t}}
	L(\lambda_1-i_1,i_1, \ldots, \lambda_k-i_k, i_k).
\end{equation}
The next lemma demonstrates a useful symmetry of $\bigL{\lambda}{m}$.
\begin{lemma}
\label{lem:bigL} 
	For all $0\leq m \leq n$ and any composition $\lambda$ of $n$, 
	\[
		\bigL{\lambda}{m} = \bigL{\lambda}{n-m}.
	\]
\end{lemma}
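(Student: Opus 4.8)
The plan is to prove the symmetry by exhibiting an explicit bijection on necklaces that toggles the parity of every letter, hence swaps $o(s)$ with $n - o(s)$ while preserving the constraint $a_{2t}(s) + a_{2t+1}(s) = \lambda_{t+1}$. Concretely, for a word $s = s_1 s_2 \cdots s_n$ on the alphabet $\{0, 1, \ldots, 2k-1\}$, define $\phi(s)$ to be the word obtained by replacing each letter $s_j$ with $s_j + 1$ if $s_j$ is even and with $s_j - 1$ if $s_j$ is odd; equivalently, $\phi$ flips the last bit of each letter. This map is an involution, it commutes with cyclic rotation (so it descends to a well-defined involution on necklaces), and it sends a word with $a_{2t}(s) = a_{2t}$, $a_{2t+1}(s) = a_{2t+1}$ to one with $a_{2t}(\phi(s)) = a_{2t+1}$, $a_{2t+1}(\phi(s)) = a_{2t}$. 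Thus $\phi$ preserves the pair-sum condition defining membership in $N_\lambda$ (up to the primitivity subtleties discussed below), and it sends $o(s) = m$ to $o(\phi(s)) = n - m$.

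The next step is to argue that $\phi$ preserves the property ``primitive.'' If $s = q^r$ for some word $q$ and $r > 1$, then $\phi(s) = (\phi(q))^r$ since $\phi$ acts letterwise, so $\phi$ maps non-primitive words to non-primitive words; applying this to $\phi$ and to $\phi^{-1} = \phi$ shows $s$ is primitive if and only if $\phi(s)$ is. Restricting to primitive necklaces, $\phi$ therefore gives a bijection from the set of primitive necklaces $[s] \in N_\lambda$ with $o(s) = m$ to those with $o(s) = n - m$. Since $\bigL{\lambda}{m}$ is by definition the number of primitive necklaces in $N_\lambda^{(m)}$, this bijection yields $\bigL{\lambda}{m} = \bigL{\lambda}{n-m}$ directly. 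Alternatively, one can avoid necklace-level bookkeeping entirely and work with the formula \eqref{eq:bigL}: the substitution $i_t \mapsto \lambda_t - i_t$ is a bijection between the index sets $\{(i_t) : \sum_t i_t = m,\ 0 \le i_t \le \lambda_t\}$ and $\{(i_t) : \sum_t i_t = n - m,\ 0 \le i_t \le \lambda_t\}$, and it matches the summand $L(\lambda_1 - i_1, i_1, \ldots, \lambda_k - i_k, i_k)$ with $L(i_1, \lambda_1 - i_1, \ldots, i_k, \lambda_k - i_k)$; since $L(a_1, \ldots, a_{2k})$ is symmetric under swapping the pair $(a_{2t-1}, a_{2t})$ for each $t$ (the formula \eqref{eq:L} depends only on the multiset of exponents together with $n$ and $\gcd$, and a simultaneous swap within pairs is such a symmetry of the whole tuple), the two summands are equal termwise.

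The only real subtlety — and the step I expect to require the most care — is the interaction between the letterwise bit-flip and the fact that $N_\lambda$ also includes certain non-primitive necklaces of the form $q^2$ with $o(q)$ odd; however, since $\bigL{\lambda}{m}$ counts only the \emph{primitive} necklaces in $N_\lambda^{(m)}$, those exceptional elements are irrelevant to the statement being proved, and the argument via \eqref{eq:bigL} sidesteps the issue completely. I would write up the second (formula-based) argument as the main proof, since it is shortest and entirely self-contained given \eqref{eq:L} and \eqref{eq:bigL}, and perhaps remark that the bit-flip involution gives a bijective explanation. The one routine check to include is that $\sum_t i_t = m$ with $0 \le i_t \le \lambda_t$ is equivalent, under $i_t \mapsto \lambda_t - i_t$, to $\sum_t i_t = n - m$ with the same box constraints, which is immediate from $\sum_t \lambda_t = n$.
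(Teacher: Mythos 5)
Your proposal is correct, and its bijective half is essentially the paper's own proof: the paper establishes the symmetry exactly by the letterwise swap $2t \leftrightarrow 2t+1$, which exchanges odd and even letters while preserving $a_{2t}(s)+a_{2t+1}(s)=\lambda_{t+1}$, i.e.\ your map $\phi$. The alternative you say you would feature as the main write-up --- applying the substitution $i_t \mapsto \lambda_t - i_t$ in \eqref{eq:bigL} and matching summands termwise --- is a genuinely different (formula-level) route and is also valid; the paper does not take it. Two small points if you go that way: the identity $L(\lambda_1-i_1,i_1,\ldots,\lambda_k-i_k,i_k)=L(i_1,\lambda_1-i_1,\ldots,i_k,\lambda_k-i_k)$ is cleanest to justify not from the closed formula \eqref{eq:L} (which is stated under the hypothesis that all parts are at least $1$, while some $i_t$ or $\lambda_t-i_t$ in the sum may vanish) but from the combinatorial definition of $L$, since relabeling the alphabet by any permutation of the letters gives a primitivity-preserving bijection on necklaces --- and the relevant relabeling here is precisely the bit-flip again, so the two arguments are really the same bijection viewed at different levels. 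The necklace-level version has the slight advantage of making the parity-switch $o(s)\mapsto n-o(s)$ visible directly, which is how the paper phrases it; your observation that the $q^2$ elements of $N_\lambda$ are irrelevant because $\bigL{\lambda}{m}$ counts only primitive necklaces is accurate and matches the paper's implicit treatment.
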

\begin{proof}
	Given a primitive word $s \in N_\lambda$ with $o(s) = m$, we can construct a primitive word $s' \in N_\lambda$ 
	with $o(s) = n-m$ by letting $s'_i = 2t$ if $s_i = 2t+1$ and letting $s'_i = 2t+1$ if $s_i = 2t$ for all $0\leq t \leq k-1$. 
	Doing this switches odd letters with even letters and also ensures 
	$\lambda_{t+1} = a_{2t}(s) + a_{2t+1}(s) = a_{2t}(s') + a_{2t+1}(s')$.
\end{proof}
 
\subsection{The $\PPat_\lambda$ mapping}

In order to prove Theorem \ref{thm:main}, we must define a mapping $\PPat_\lambda$ from $N_\lambda$ to 
$\lambda$-unimodal cyclic permutations. The mapping we describe here is a special case of the mapping 
$\PPat_\sigma$ defined in \cite{archer}.

Define a map $\Sigma: W_{2k}(n) \to W_{2k}(n)$ which takes a word $s_1s_2\ldots s_n$ to the word 
$s_2s_3\ldots s_ns_1$. We will define an ordering on words in $W_{2k}(n)$ denoted by $\prec$. This ordering is similar to $\prec_\sigma$ for $\sigma = (+-)^k$ found in \cite{archer}, where $\prec_\sigma$ is a generalization of the lexicographical ordering which is motivated by dynamics of the signed shifts. 

Suppose that 
$s= s_1s_2\ldots s_n$ and $s' = s'_1s'_2\ldots s'_n$ and that for some $1\leq i \leq n$, we have 
$s_1\ldots s_{i-1} = s'_1\dots s'_{i-1}$ and $s_i \neq s'_i$. Then we say that $s\prec s'$ if either $o(s_1\ldots s_{i-1})$ 
is even and $s_i<s'_i$ or if $o(s_1\ldots s_{i-1})$ is odd and $s_i>s'_i$, where $<$ is the ordering on $\{0, 1, 2, \ldots, 2k-1\}$ inherited from the integers. 

We are now prepared to define a mapping $\PPat_\lambda: N_\lambda \to \C_n$. (We will see in Prop \ref{lem:map PPat} that the image of this map is actually $\C_\lambda$, the set of $\lambda$-unimodal cyclic permutations.) Suppose first that $[s]\in N_\lambda$ is primitive. Choose a representative 
$s\in[s]$. Take $\pi = \pi_1\pi_2\ldots \pi_n\in \S_n$ to be the permutation (which we call the \emph{pattern}) so that 
$\pi$ is in the same relative order as the sequence 
\[
	s, \Sigma(s), \Sigma^2(s), \ldots, \Sigma^{n-1}(s)
\] 
with respect to the ordering $\prec$. Then take $\pihat = (\pi_1\pi_2\ldots \pi_n)$, the cyclic permutation obtained by 
sending $\pi_1$ to $\pi_2$, $\pi_2$ to $\pi_3$, etc. Then we say that $\PPat_\lambda([s]) = \pihat$. Notice that the 
choice of representative $s \in [s]$ does not matter.

If $[s]\in N_\lambda$ is not primitive, then $s$ is of the form $q^2$ for primitive $q$ where $o(q)$ is odd. Defining the 
pattern $\pi$ in this case is a little different since for any representative $s\in[s]$, 
$\Sigma^{i}(s) = \Sigma^{\frac{n}{2}+i}(s)$ and so it is not immediately obvious how to associate a permutation of 
length $n$ to it. Though $\prec$ does not give an ordering on the cyclic shifts of $s$, it does give a partial ordering where the only pairs which cannot be compared are $\Sigma^i(s)$ and $\Sigma^{\frac{n}{2}+i}(s)$ since they are equal for all $0\leq i <\frac{n}{2}$. If we wish to assign a permutation to the cyclic shifts of $s$ which reflect the ordering, then for any $i$, we should have that either $\pi_i =\pi_{\frac{n}{2} +i} + 1$ or $\pi_i =\pi_{\frac{n}{2} +i} - 1$. We will decide to take $\pi_1<\pi_{\frac{n}{2} +1}$. This first assignment is made somewhat arbitrarily, but regardless will return the same $\pihat$ for any representative $s\in [s]$. By making this assignment, we extend our partial order determined by $\prec$ to a linear order on the cyclic shifts of $s$ and we are thus forced to have $\pi_i<\pi_{\frac{n}{2} +i}$ whenever $o(s_1\ldots s_{i-1})$ is even and  $\pi_i>\pi_{\frac{n}{2} +i}$ whenever $o(s_1\ldots s_{i-1})$ is odd.


Let us see an example. Suppose $\lambda = (3,6)$ and $s = 321132202$ is our choice of representative. Certainly, $\pi_8 =1$ since $\Sigma^7(s) = 023211322$ must be less than any other cyclic shift of $s$ since it is the only one that starts with 0 (the smallest number in our set $\{0,1,2,3\}$). Additionally, $\pi_3$ and $\pi_4$ must be 2 and 3 in some order since $\Sigma^2(s)$ and $\Sigma^3(s)$ both start with 1. To determine which one is smaller, we compare them using the ordering $\prec$. $\Sigma^2(s) = 113220232$ and $\Sigma^3(s) = 132202321$. Comparing the two, we see that the first place they disagree is the second position and $o(1)$ is odd. By comparing the values at the first place they disagree, we determine that $\pi_3>\pi_4$. By repeating this process, we obtain $\pi = 953286417$ and $\pihat = (953286417) = 782134965$. Notice that $\pihat$ is $(3,6)$-unimodal since $782$ and $134965$ are 
both unimodal segments. If we had chosen a different representative, for example, $s = 322023211$, then we would have obtained $\pi = 864179532$ and $\pihat = (864179532) = 782134965$. Indeed, the choice of representative did not matter.

For an example when $[s]$ is not primitive, consider $\lambda = (4,4)$ and $s = 02210221$. Notice that 
$s = (0221)^2$ where $0221$ is primitive and $o(0221) = 1$ is odd. Therefore $[s] \in N_\lambda$. In this case, we notice that $s = \Sigma^4(s)$ is the smallest with respect to $\prec$ and so $\pi_1$ and $\pi_5$ are 1 and 2 in some order. We arbitrarily assign $\pi_1 = 1$ and $\pi_5 = 2$ (this is the only time we make a ``choice''). $\Sigma^{3}(s)=\Sigma^7(s)$ are the second smallest with respect to $\prec$ and thus $\pi_4$ and $\pi_8$ will be 3 and 4 in some order. Since we chose $\pi_1<\pi_5$, we are forced to have $\pi_4<\pi_8$. We continue until we obtain 
$\pi = 17532864$ and $\pihat = (17532864) = 78213456$. Notice that $\pihat$ is $(4,4)$-unimodal since $7821$ 
and $3456$ are both unimodal segments.

\begin{remark}
\label{remark: prec}
	Notice that the way $\prec$ is defined, we must have that if $s\prec s'$ and $s_1 = s'_1$, then 
	$\Sigma(s)\prec\Sigma(s')$ if $s_1$ is even and $\Sigma(s)\succ\Sigma(s')$ if $s_1$ is odd.
\end{remark}

\subsection{The character $\chi$} \label{sec:chi}

Consider the one-dimensional representation $\psi$ on $H = \langle (123\cdots n) \rangle$ obtained by letting 
$\psi(g^i) = \zeta_n^i$ where $g$ is a generator of $H$ and $\zeta_n$ is a primitive $n$th root of unity. We also use $\psi$ 
to denote the character for this representation since it coincides with the representation itself. Denote by $\rho$ the 
representation on $\S_n$ induced from the representation on $H\leq \S_n$ described above. In the next proposition, 
we prove that $\chi$, the character of $\rho$, satisfies Equation \eqref{eq:chi} on each conjugacy class of 
cycle type $\lambda$ of $\S_n$. Recall that $\chi_\lambda$ denotes the value of $\chi$ on conjugacy class $\lambda.$

The following proposition is well known (see, for example, \cite{gill}). 

\begin{proposition}
\label{prop:ind}
	For any conjugacy class 
	$\lambda$, 
	\[
		\chi_\lambda = \begin{cases} (k-1)! d^{k-1} \mu(d) & \text{if } \lambda = (d^k) \\ 0 & \text{otherwise.} \end{cases}
	\]
\end{proposition}
\begin{proof}
	The induced character on $\S_n$ is defined by 
	\[
		\chi(\tau) = \frac{1}{|H|} \sum_{\sigma \in \S_n} \psi(\sigma^{-1}\tau\sigma)
	\]
	where we define $\psi(\sigma^{-1}\tau\sigma) = 0$ when $\sigma^{-1}\tau\sigma \notin H$. 
	
	Since $\sigma^{-1}\tau\sigma$ is in the same conjugacy class as $\tau$, it will have the same cycle type. Notice 
	that $H$ only contains permutations with cycle type $(d^k)$ for some $k$. Therefore, if $\tau$ has cycle type 
	which is not of the form $(d^k)$, then every $\psi(\sigma^{-1}\tau\sigma) $ contributes 0 and so $\chi(\tau) = 0$. 
	Thus, we see that $\chi_\lambda = 0$ whenever $\lambda \neq (d^k)$ for some $k$. 
	
	It is well known that 
	\[
		\sum_{\gcd(i,n) = 1} \zeta_n^i = \mu(n).
	\]
	Suppose $\tau$ has cycle type $\lambda = (d^k)$. Then $\psi(\sigma^{-1}\tau\sigma)$ will always be either $0$ or 
	a primitive $d^{\text{th}}$ root of unity since permutations in $H$ with cycle type $d^k$ are the generators of the 
	cyclic subgroup of $H$ of size $d$. Since $|H|= n = dk$ and $\sum_{\gcd(d,i) = 1} \zeta_d^i = \mu(d)$, it is enough 
	to show that $|\{ \sigma \in \S_n : \sigma^{-1}\tau\sigma = \tau'\}| = k! d^k$ for each $\tau' \in H$ with cycle type 
	$(d^k)$. 
	
	Suppose we have two elements $g$ and $h$ so that $g^{-1} \tau g = \tau'$ and $h^{-1} \tau h = \tau'$. Then we 
	must have that $g^{-1} \tau g  = h^{-1} \tau h$, which means that $gh^{-1}\tau hg^{-1} = \tau$ and so $hg^{-1}$ is 
	in the centralizer of $\tau$, $C(\tau)$. Therefore, if $g$ is such that $g^{-1} \tau g = \tau'$, then every possible $h$ 
	such that $h^{-1} \tau h = \tau'$ must be of the form $ag$ for some $a$ in $C(\tau)$. Therefore 
	$|\{ \sigma \in \S_n : \sigma^{-1}\tau\sigma = \tau'\}| = |C(\tau)|$. By the orbit-stabilizer theorem, this is equal to 
	$|\S_n|/|\{\pi \in \S_n : \pi \text{ has cycle type } (d^k)\}$. Using the well-known formula for the size of conjugacy 
	classes in $\S_n$, we find that indeed $|C(\tau)| = k! d^k$. 
\end{proof}

\subsection{Counting Lemmas}

Here we include a few combinatorial lemmas we will need in the proof of Theorem \ref{thm:main} in Section \ref{sec:pf}.

\begin{lemma}
\label{lem:bincoef}
	Suppose $p$ is an $r$-degree polynomial. Then when $0\leq r<n$, 
	\begin{equation}
	\label{eq:p_r}
		\sum_{i = 0}^{n} (-1)^{i} p(i) {{n}\choose{i}}  = 0.
	\end{equation}
\end{lemma}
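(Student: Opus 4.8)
The plan is to reduce \eqref{eq:p_r} to a single well-known binomial identity by exploiting linearity. The map $p \mapsto \sum_{i=0}^{n} (-1)^{i} p(i) \binom{n}{i}$ is linear in $p$, so it suffices to verify \eqref{eq:p_r} when $p$ ranges over a basis of the space of polynomials of degree at most $r$. I would use the basis of falling factorials $(x)_j = x(x-1)\cdots(x-j+1)$, $0 \le j \le r$, since these interact cleanly with binomial coefficients.

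The key step is the absorption identity $(i)_j \binom{n}{i} = (n)_j \binom{n-j}{i-j}$, which holds for every $i$ (both sides vanish when $i < j$). Substituting it into the sum and reindexing via $i = j + \ell$ gives
\[
	\sum_{i=0}^{n} (-1)^{i} (i)_j \binom{n}{i} = (n)_j \sum_{\ell=0}^{n-j} (-1)^{j+\ell} \binom{n-j}{\ell} = (-1)^j (n)_j \, (1-1)^{n-j}.
\]
Because $j \le r < n$, the exponent $n-j$ is a positive integer, so $(1-1)^{n-j} = 0$ and the sum vanishes for each basis element; by linearity it vanishes for all $p$ of degree $r < n$.

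There is essentially no real obstacle here beyond bookkeeping: the only points needing care are verifying the absorption identity and the range of the reindexed sum. As an alternative I could run a short induction using Pascal's rule $\binom{n}{i} = \binom{n-1}{i} + \binom{n-1}{i-1}$, which yields $\sum_{i=0}^{n} (-1)^{i} p(i)\binom{n}{i} = \sum_{i=0}^{n-1} (-1)^{i} \bigl(p(i)-p(i+1)\bigr)\binom{n-1}{i}$, then observe that $p(i)-p(i+1)$ has degree $r-1$ and apply the inductive hypothesis (with $r-1 < n-1$), the base case $r=0$ being $(1-1)^n = 0$. Both arguments are brief; I would present the falling-factorial version.
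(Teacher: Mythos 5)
Your proof is correct and takes essentially the same approach as the paper: both arguments reduce to the falling-factorial basis by linearity and then obtain the vanishing from an alternating binomial sum, i.e.\ $(1-1)^{n-j}=0$ for $j\le r<n$, which is exactly what the paper's $r$-fold differentiation of $(x+1)^n$ followed by setting $x=-1$ produces. Your absorption-identity step is just a repackaging of that derivative computation, so there is no substantive difference.
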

\begin{proof}
	Given the Binomial Theorem, 
	\[
		(x+1)^n = \sum_{i = 0}^n\binomial{n}{i} x^i,
	\]
	if $r<n$, then we can take $r$ derivatives of both sides to obtain: 
	\[
		\frac{n!}{(n-r)!}(x+1)^{n-r} = \sum_{i = r}^n\binomial{n}{i} x^{i-r} p_r(i), 
	\]
	where $p_r(i) = i(i-1)\cdots (i-r+1)$. Plugging in $x = -1$, we obtain Equation \eqref{eq:p_r} for each $p_r$. 
	(Since $p_r(i) = 0$ for all $i<r$, we can start indexing at $i = 0$.) Any polynomial can be written as a linear
	combination of these polynomials and so Equation \eqref{eq:p_r} must hold for all polynomials of degree 
	at most $n-1$.
\end{proof}

\begin{lemma}
\label{lem:incex} 
	For any $d,k\geq 1$, 
	\[
		\sum_{i = 1}^{k} (-1)^{i+k} {{di}\choose{k}} {{k}\choose{i}} = d^k.
	\]
\end{lemma}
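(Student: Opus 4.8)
The plan is to treat the left-hand side as an alternating binomial sum and invoke Lemma~\ref{lem:bincoef}, which says precisely that such sums annihilate polynomials of degree below $k$. First I would note that adjoining the term $i=0$ changes nothing, since $\binom{0}{k}=0$ for $k\ge 1$, and rewrite
\[
	\sum_{i=1}^{k}(-1)^{i+k}\binom{di}{k}\binom{k}{i} \;=\; (-1)^k\sum_{i=0}^{k}(-1)^{i}\binom{k}{i}\,g(i),
	\qquad g(i):=\binom{di}{k}.
\]
The key observation is that $g(i)=\dfrac{(di)(di-1)\cdots(di-k+1)}{k!}$ is a polynomial in $i$ of degree exactly $k$ whose leading coefficient is $d^k/k!$; write $g(i)=\dfrac{d^k}{k!}\,i^k + q(i)$ with $\deg q \le k-1$.

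Next, applying Lemma~\ref{lem:bincoef} with $n=k$ to the polynomial $q$ gives $\sum_{i=0}^{k}(-1)^i\binom{k}{i}q(i)=0$, so the entire sum collapses onto the single top-degree monomial:
\[
	(-1)^k\sum_{i=0}^{k}(-1)^{i}\binom{k}{i}\,g(i) \;=\; (-1)^k\,\frac{d^k}{k!}\sum_{i=0}^{k}(-1)^{i}\binom{k}{i}\,i^k.
\]
Finally I would use the classical evaluation $\sum_{i=0}^{k}(-1)^{k-i}\binom{k}{i}i^k = k!$ (it counts, by inclusion–exclusion, the surjections from a $k$-set onto itself), which after pulling out the factor $(-1)^k$ gives $\sum_{i=0}^{k}(-1)^{i}\binom{k}{i}i^k=(-1)^k k!$. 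Substituting, the sum equals $(-1)^k\cdot\dfrac{d^k}{k!}\cdot(-1)^k k! = d^k$, as claimed.

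There is essentially no hard step here: the only thing requiring care is the sign bookkeeping together with the identity $\sum_i(-1)^i\binom{k}{i}i^k=(-1)^k k!$, both of which are routine. If one prefers to avoid even that, a clean direct argument is available: $d^k$ is the number of $k$-element subsets of $[k]\times[d]$ that meet every fiber $\{j\}\times[d]$ (such a subset necessarily has exactly one point in each of the $k$ fibers, hence encodes a function $[k]\to[d]$), and inclusion–exclusion over which fibers are permitted — with $i$ permitted fibers contributing $\binom{di}{k}$ subsets and carrying a sign $(-1)^{k-i}=(-1)^{k+i}$ — reproduces the left-hand side term by term.
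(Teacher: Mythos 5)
Your proof is correct, but your primary route differs from the paper's. The paper proves this lemma purely combinatorially, by the very argument you relegate to your closing remark: $d^k$ counts the ways to pick one element from each of $k$ boxes of $d$ objects (equivalently, the $k$-subsets of the $kd$ objects meeting every box), and inclusion--exclusion over which boxes are permitted, with $i$ permitted boxes contributing $\binom{k}{i}\binom{di}{k}$, yields the alternating sum. Your main argument instead treats $g(i)=\binom{di}{k}$ as a degree-$k$ polynomial in $i$ with leading coefficient $d^k/k!$, invokes Lemma~\ref{lem:bincoef} with $n=k$ to kill the lower-degree part $q$, and finishes with the classical surjection identity $\sum_{i=0}^{k}(-1)^{i}\binom{k}{i}i^{k}=(-1)^{k}k!$; the sign bookkeeping and the application of the lemma (degree of $q$ at most $k-1<k$) are all in order. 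What your route buys is a uniform ``polynomial annihilation'' viewpoint that reuses the same Lemma~\ref{lem:bincoef} already employed in the main theorems, at the cost of importing one extra standard identity; the paper's (and your alternative) bijective count is shorter and self-contained, needing no polynomial leading-coefficient analysis. Either proof would serve.
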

\begin{proof}
	We claim that both sides of this formula count the number of ways to pick a single element from each of $k$ 
	different boxes containing $d$ objects each. The right hand side clearly counts the ways to do this. On the left 
	hand side, we choose $i$ boxes to consider, in ${{k}\choose{i}}$ ways, then choose $k$ elements from this collection 
	of boxes (possibly pulling many from the same box) in ${{di}\choose{k}}$ ways. Using inclusion exclusion, we find 
	the number of ways to choose $k$ objects from $k$ different boxes.
\end{proof}

\begin{lemma}
\label{lem:long} 
	Suppose $\gamma_1 + \gamma_2 + \cdots + \gamma_k = r$. Then, 
	\[
		\sum_{\substack{0\le a_t\leq \gamma_t  \\ a_1+\cdots +a_k = i}}
		\frac{(r)!}{ (\gamma_1-a_1)!(a_1)!\cdots (\gamma_k-a_k)!(a_k)!} = \frac{(r)!}{\gamma_1!\cdots \gamma_k!} 
		{{r}\choose{i}}.
	\]
\end{lemma}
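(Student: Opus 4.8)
The plan is to prove this identity by a direct manipulation of generating functions, exploiting the fact that the multinomial-style sum on the left factors over the $k$ parts. Write $R = r!$ and pull it out as a common factor on both sides; it then suffices to show
\[
	\sum_{\substack{0\le a_t\leq \gamma_t \\ a_1+\cdots+a_k = i}} \prod_{t=1}^k \frac{1}{(\gamma_t-a_t)!\,a_t!}
	\;=\; \frac{1}{\gamma_1!\cdots\gamma_k!}\binom{r}{i}.
\]
First I would recognize that $\dfrac{1}{(\gamma_t-a_t)!\,a_t!} = \dfrac{1}{\gamma_t!}\dbinom{\gamma_t}{a_t}$, so that the left side equals $\dfrac{1}{\gamma_1!\cdots\gamma_k!}\displaystyle\sum_{a_1+\cdots+a_k=i}\prod_{t=1}^k\dbinom{\gamma_t}{a_t}$ (the constraint $0\le a_t\le\gamma_t$ being automatic since $\binom{\gamma_t}{a_t}=0$ outside that range). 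After cancelling the common factor $\frac{1}{\gamma_1!\cdots\gamma_k!}$ from both sides, the claim reduces to the Vandermonde-type identity
\[
	\sum_{a_1+\cdots+a_k = i}\binom{\gamma_1}{a_1}\binom{\gamma_2}{a_2}\cdots\binom{\gamma_k}{a_k} = \binom{\gamma_1+\cdots+\gamma_k}{i} = \binom{r}{i}.
\]

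The key step is then to prove this last identity, which is the standard iterated Vandermonde convolution. I would do this by comparing coefficients of $x^i$ in the polynomial identity $(1+x)^{\gamma_1}(1+x)^{\gamma_2}\cdots(1+x)^{\gamma_k} = (1+x)^{\gamma_1+\cdots+\gamma_k} = (1+x)^r$: the left side expands, by the binomial theorem and the definition of the Cauchy product, to $\sum_i \left(\sum_{a_1+\cdots+a_k=i}\prod_t\binom{\gamma_t}{a_t}\right)x^i$, while the right side is $\sum_i\binom{r}{i}x^i$. Equating coefficients gives exactly what is needed.

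I do not expect any real obstacle here; the only thing to be slightly careful about is the interplay between the explicit summation bounds $0\le a_t\le\gamma_t$ in the statement and the implicit vanishing of binomial coefficients outside that range, and the bookkeeping that $a_t$ an integer forces $i$ to range over $0\le i\le r$ (for other $i$ both sides are zero). Once the factorials are rewritten as binomial coefficients, the result is immediate from the generating function identity above, so the entire argument is a few lines.
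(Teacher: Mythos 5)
Your proof is correct. The reduction $\frac{1}{(\gamma_t-a_t)!\,a_t!} = \frac{1}{\gamma_t!}\binom{\gamma_t}{a_t}$ is valid, the iterated Vandermonde convolution $\sum_{a_1+\cdots+a_k=i}\prod_t\binom{\gamma_t}{a_t}=\binom{r}{i}$ follows as you say from comparing coefficients of $x^i$ in $(1+x)^{\gamma_1}\cdots(1+x)^{\gamma_k}=(1+x)^r$, and your remarks about the vanishing of binomial coefficients outside $0\le a_t\le\gamma_t$ take care of the bookkeeping. However, you take a different route from the paper: the paper's proof is a one-line double count, observing that both sides count the ways to partition $r$ objects into blocks of sizes $\gamma_1,\ldots,\gamma_k$ and then color $i$ of the $r$ objects --- on the left one records how many colored objects $a_t$ land in each block, on the right one first distributes into blocks ($r!/(\gamma_1!\cdots\gamma_k!)$ ordered set partitions) and then chooses the $i$ colored objects independently ($\binom{r}{i}$). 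Your algebraic argument buys self-containedness at the level of formal manipulation and makes the role of the Vandermonde convolution explicit, which is arguably clearer for a reader who wants to verify each step mechanically; the paper's bijective argument is shorter and explains \emph{why} the identity holds without invoking generating functions, at the cost of leaving the bookkeeping implicit. Either proof is perfectly acceptable here.
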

\begin{proof}
	Both sides count the number of ways to separate $r$ objects into $k$ different sets of size $\gamma_t$ for 
	$1\leq t \leq k$ and then color $i$ of the objects. 
\end{proof}

\section{Necklaces and cyclic permutations}
\label{sec:necklaces}

The following theorem is a special case of Theorem 2.1 in \cite{archer}. For that reason, we provide a short proof without 
all of the details. A complete proof of a more general statement can be found in \cite{archer}.

\begin{lemma}
\label{lem:map PPat}
	For any $[s] \in N_\lambda$, we have $\PPat_\lambda([s]) \in \mathcal{C}(\lambda)$. Additionally, the map 
	$\PPat_\lambda: N_\lambda \to \mathcal{C}(\lambda)$ is surjective.
\end{lemma}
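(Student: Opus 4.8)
The plan is to establish the two claims separately, leaning heavily on the combinatorial description of $\PPat_\lambda$ via the $\prec$-ordering of the cyclic shifts $s, \Sigma(s), \ldots, \Sigma^{n-1}(s)$. For the first claim, that $\PPat_\lambda([s]) \in \C(\lambda)$, I would argue in two parts. That $\pihat = (\pi_1\pi_2\cdots\pi_n)$ is a single $n$-cycle is immediate from the construction unless the list of shifts has a repeat; the only way a repeat can occur is when $[s]$ is non-primitive, and the paper has already handled that case by extending $\prec$ so that $\Sigma^i(s)$ and $\Sigma^{n/2+i}(s)$ are assigned distinct ranks. So in all cases $\pi$ is a genuine permutation of $[n]$ and $\pihat$ is cyclic. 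The substantive point is that $\pihat$ is $\lambda$-unimodal: I would show that within each block $[s_{i-1}(\lambda)+1, s_i(\lambda)]$ of positions, the values $\pihat_{s_{i-1}+1}, \ldots, \pihat_{s_i}$ first increase and then decrease. The key observation is Remark \ref{remark: prec}: comparing $\Sigma^{a}(s)$ with $\Sigma^{b}(s)$ reduces, after stripping a common prefix, to comparing shifts whose ranks are governed by the parity of the number of odd letters consumed so far. Since the membership condition for $N_\lambda$ forces exactly $\lambda_{t+1}$ letters in $\{2t, 2t+1\}$ within each coordinate-block, walking $\Sigma$ through one block toggles the "orientation" (even/odd parity of $o$ of the prefix) in a controlled way, and I expect the unimodality of each segment to fall out of tracking when this parity flips — precisely once per block in the relevant sense, producing one ascending run followed by one descending run.

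For surjectivity, I would start from an arbitrary $\pihat \in \C(\lambda)$ and reconstruct a necklace $[s]$ with $\PPat_\lambda([s]) = \pihat$. Write $\pihat$ in one-line notation relative to a chosen starting point as $\pi = \pi_1\cdots\pi_n$ with $\pihat = (\pi_1\cdots\pi_n)$. The idea is to read off, for each position $j$, which coordinate-block $t$ the value sits in and whether $j$ lies on the ascending or descending side of its unimodal segment; this data should determine whether $s_j$ should be the even letter $2t$ or the odd letter $2t+1$, and one checks that the resulting word $s = s_1\cdots s_n$ lies in $N_{2k}(n)$ with the right block-sums $a_{2t}(s)+a_{2t+1}(s) = \lambda_{t+1}$. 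One then has to verify that $[s]$ is either primitive or a square of a primitive word with an odd number of odd letters (so that $[s] \in N_\lambda$), which follows because $\pihat$ is a single $n$-cycle: any finer periodicity of $s$ would force $\pihat$ to decompose, except for the exceptional "order-2" collapse that the definition of $N_\lambda$ explicitly allows. Finally one checks $\PPat_\lambda([s]) = \pihat$ by confirming that $s, \Sigma(s), \ldots, \Sigma^{n-1}(s)$ are in the relative order $\pi_1, \ldots, \pi_n$ under $\prec$, which is exactly the condition we built $s$ to satisfy.

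The main obstacle, and the place I would spend the most care, is the unimodality argument and its mirror in the surjectivity construction: correctly bookkeeping the parity of $o(s_1\cdots s_{i-1})$ as $i$ sweeps across a coordinate-block, and translating a flip of that parity into a descent versus an ascent of $\pihat$. The $\prec$-ordering is "flip-flopping" by design, and the danger is sign errors — convincing oneself that each block contributes exactly the shape increasing-then-decreasing rather than, say, several runs. Since this is Theorem 2.1 of \cite{archer} specialized to $\sigma = \mathrm{id}$-type compositions, I would follow that argument's structure and present only the specialization, citing \cite{archer} for the full details as the excerpt already signals. The counting lemmas and the $N_\lambda$ versus $\bigL{\lambda}{m}$ bookkeeping (Lemma \ref{lem:N=bigL}) are not needed here; they enter only later when one counts fibers of $\PPat_\lambda$, so this lemma should be a clean if slightly technical consequence of the definitions plus Remark \ref{remark: prec}.
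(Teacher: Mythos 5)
Your overall route coincides with the paper's: use Remark \ref{remark: prec} for the forward direction, and for surjectivity rebuild the word from the ascending/descending side of each unimodal segment, citing \cite{archer} for full details. However, the forward direction as you sketch it has a genuine gap, and the mechanism you describe is not the right one. The $\lambda$-unimodal segments of $\pihat$ are blocks of \emph{consecutive positions in the one-line notation of $\pihat$}, while the letters from $\{2t,2t+1\}$ are scattered arbitrarily through $s$; there is no ``walking $\Sigma$ through one block'' of consecutive positions of $s$, and the parity that controls the comparison is not the parity of $o(s_1\cdots s_{i-1})$ but the parity of the common first letter $s_i=s_j=t$ of the two shifts being compared --- that is exactly what Remark \ref{remark: prec} records. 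The missing idea is the translation between the $\prec$-order of the shifts and the one-line notation of the cycle. Setting $e_t = |\{j : s_j < t\}|$, the shifts whose first letter is $t$ are precisely those whose ranks $\pi_i$ lie in the interval $(e_t, e_{t+1}]$ (because $\prec$ orders shifts first by their initial letter); and since $\pihat_{\pi_i} = \pi_{i+1}$, the statement ``$\pi_i<\pi_j$ with $s_i=s_j=t$ implies $\pi_{i+1}<\pi_{j+1}$ for $t$ even and $\pi_{i+1}>\pi_{j+1}$ for $t$ odd'' says exactly that $\pihat_{e_t+1}\cdots\pihat_{e_{t+1}}$ is increasing for $t$ even and decreasing for $t$ odd. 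Concatenating the runs for the letters $2t$ and $2t+1$, of total length $a_{2t}(s)+a_{2t+1}(s)=\lambda_{t+1}$ and starting at position $e_{2t}+1=s_t(\lambda)+1$, gives each unimodal block. Without the identity $\pihat_{\pi_i}=\pi_{i+1}$ and the identification of the value-interval $(e_t,e_{t+1}]$ with the positions of $s$ carrying letter $t$, your ``I expect the unimodality to fall out'' is not an argument, and the prefix-parity bookkeeping you propose would not by itself yield one increasing-then-decreasing run per block.

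On surjectivity your construction is the paper's: choose the split points $e_1, e_3, \ldots$ inside each unimodal segment and set $s_i = t$ whenever $e_t < \pi_i \leq e_{t+1}$; that part is fine. One caution: your claim that membership in $N_\lambda$ (primitivity, or the allowed square of a primitive word with an odd number of odd letters) ``follows because $\pihat$ is a single $n$-cycle'' is asserted rather than proved; the paper explicitly flags this as a nontrivial point, handled by a case analysis it omits, so it should be treated as a step requiring its own argument rather than an immediate consequence of cyclicity.
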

\begin{proof}
	Suppose $[s] \in N_\lambda$, $\pi$ is the pattern of $s'\in [s]$, and $\pihat = \PPat_\lambda([s])$. For all 
	$0\leq t \leq 2k$, define $e_t = |\{j \in [n] : s'_j <t \}|$. For any $0\leq t <2k$, suppose
	$e_{t} < \pi_i<\pi_j\leq e_{t+1}$. Since $\pi_i<\pi_j$, it must be true that 
	$\Sigma^{i-1}(s')\prec\Sigma^{j-1}(s')$. Additionally, since $e_{t} < \pi_i,\pi_j\leq e_{t+1}$, we must have 
	$s'_i = s'_j = t$. By Remark \ref{remark: prec}, it follows that $\Sigma^i(s')\prec\Sigma^j(s')$ (and thus $\pi_{i+1}<\pi_{j+1}$) 
	if $t$ is even and $\Sigma^i(s')\succ \Sigma^j(s')$ (and thus $\pi_{i+1}>\pi_{j+1}$) if $t$ is odd. 
	
	From this, it will follow that the segment $\pihat_{e_t+1}\ldots \pihat_{e_{t+1}}$ is increasing if $t$ is even 
	and decreasing if $t$ is odd. For if $e_{t} < a<b\leq e_{t+1}$, we only need to show that $\pihat_a<\pihat_b$ if $t$ 
	is even and $\pihat_a>\pihat_b$ if $t$ is odd. First, notice that $\pihat_{\pi_i} = \pi_{i+1}$. Therefore, take $i$ and 
	$j$ so that $\pi_i = a$ and $\pi_j= b$. Then $e_{t} < \pi_i<\pi_j\leq e_{t+1}$, and thus 
	$\pihat_a = \pi_{i+1}<\pi_{j+1} = \pihat_b$ if $t$ is even and $\pihat_a = \pi_{i+1}>\pi_{j+1}= \pihat_b$ if $t$ is odd.

	Finally, since $e_{t} = \sum_{r<t} a_r(s)$, it follows that $e_{2t+2}-e_{2t} = a_{2t}(s) + a_{2t+1}(s) = \lambda_{t+1}$. 
	Therefore the segment $\pihat_{e_{2t}+1} \ldots\pihat_{e_{2t+1}} \pihat_{e_{2t+1}+1}\ldots \pihat_{e_{2t+2}}$ has 
	length $\lambda_{t+1}$ and is unimodal for all $0\leq t<k$. Therefore $\pihat \in \C_\lambda$. 
	
	To see that the map is surjective let $\pihat \in \C_\lambda$ be arbitrary and let $\pi =\pi_1\ldots \pi_n$ be such that 
	$\pihat = (\pi_1\pi_2\ldots \pi_n)$. Since $\pihat \in \C_\lambda$, there is some sequence 
	$0= e_0\leq e_1\leq \cdots \leq e_{2k} = n$ so that (1) the segment $\pihat_{e_t+1}\ldots \pihat_{e_{t+1}}$ is 
	increasing if $t$ is even and decreasing if $t$ is odd, and (2) $e_{2t+2}-e_{2t} =  \lambda_{t+1}$ for all 
	$0\leq t \leq k-1$. The word $s = s_1s_2\ldots s_n$ one obtains by setting $s_i = t$ if $e_t<\pi_i\leq e_{t+1}$ is a 
	necklace in $N_\lambda$ so that $\PPat_\lambda([s]) = \pihat$. Therefore, the map is surjective.
\end{proof}

\begin{remark}
	It is a nontrivial fact that any choice of $0= e_0\leq e_1\leq \cdots \leq e_{2k} = n$ will result in a primitive 
	word (or a 2-periodic word) and thus lies in $N_\lambda$. The proof of this remark is omitted from the proof above, but follows from a case-by-case analysis.
\end{remark}

The next theorem describes the relationship between the number of odd letters in an element of $N_\lambda$ and the 
number of descents of its image under $\PPat_\lambda$. This will prove useful when we rewrite the sum in Theorem 
\ref{thm:main} as a sum over necklaces.

Recall that $N_\lambda^{(m)}$ denotes the set of elements $[s] \in N_\lambda$ where $o(s) = m$. Let 
$\PPat_\lambda^{(m)}$ be the map $\PPat_\lambda$ restricted to the set $N_\lambda^{(m)}$. Also, let $\C_\lambda(m)$ be 
the set of $\lambda$-unimodal cycles $\tau$ with $|\Des(\tau)\setminus S(\lambda)| = m$, where $S(\lambda)$ is the 
set of partial sums of $\lambda$, and let $\al{\lambda}{m} = |\C_\lambda(m)|$. 

\begin{lemma}
	The map 
	\[
		\PPat_\lambda^{(m)}: N_\lambda^{(m)} \to \bigcup_{j = 0}^{\min\{k,m\}}\C_\lambda(m-j)
	\]
	is surjective. Moreover, for each $\tau \in \C_\lambda(m-j)$, the size of the preimage 
	$\big(\PPat_\lambda^{(m)}\big)^{-1}(\tau)$ is $\binomial{\min\{k,m\}}{j}$. 
\end{lemma}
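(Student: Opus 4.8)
The plan is to analyze how the word $s$ is recovered from its image $\pihat = \PPat_\lambda([s])$, building on the surjectivity argument in Lemma~\ref{lem:map PPat}. Recall that in that proof, given $\pihat \in \C(\lambda)$ with $\pihat = (\pi_1 \cdots \pi_n)$, a word $s$ with $\PPat_\lambda([s]) = \pihat$ is determined by a choice of cut points $0 = e_0 \le e_1 \le \cdots \le e_{2k} = n$ satisfying (1) the segment $\pihat_{e_t+1}\cdots\pihat_{e_{t+1}}$ is increasing if $t$ is even and decreasing if $t$ is odd, and (2) $e_{2t+2} - e_{2t} = \lambda_{t+1}$. The letter counts of the resulting word are $a_t(s) = e_{t+1} - e_t$, so the number of odd letters is $o(s) = \sum_{t=0}^{k-1}(e_{2t+2} - e_{2t+1})$. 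Thus I first want to show: fixing $\tau \in \C(\lambda)$, the words $s$ in the fiber $\PPat_\lambda^{-1}(\tau)$ are in bijection with the valid choices of cut-point sequences $(e_t)$, and under this bijection $o(s)$ is given by that sum.

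Next I would pin down, for a fixed unimodal segment of $\tau$ of length $\lambda_{t+1}$, which positions $e_{2t+1}$ are allowed between $e_{2t}$ and $e_{2t+2} = e_{2t} + \lambda_{t+1}$. A single unimodal word of length $\lambda_{t+1}$ that rises then falls can be split as ``increasing part followed by decreasing part'' in exactly $\lambda_{t+1} - (\text{length of the maximal internal run forced by descents/ascents}) \ldots$ — more precisely, the number of valid split points is $\lambda_{t+1} - \#\{\text{descents strictly inside the segment}\} - \#\{\text{ascents strictly inside the segment}\} + 1$; but the cleaner way to say it is that each internal descent of $\tau$ lying strictly inside block $t+1$ forces the cut to be at or after it, and each internal ascent forces the cut at or before it, leaving a contiguous window of admissible positions whose size equals $\lambda_{t+1}+1$ minus (number of internal ascents) minus (number of internal descents) $= 1 + (\text{number of values that could serve as the peak})$. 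The key bookkeeping point: moving the cut point $e_{2t+1}$ one step later within its admissible window increases $o(s)$ by exactly $1$ when the step passes an ascent and... — I need to be careful here, so the real content is to set up the correspondence between cut choices and a $k$-tuple $(j_1,\dots,j_k)$ recording, in block $t$, which of a ``free'' set of positions was chosen, and to check that $o(s) = |\Des(\tau)\setminus S(\lambda)| + \sum_t (\text{something counting the chosen slack})$.

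The heart of the argument, then, is the claim that in each block $t+1$ the admissible cut points, together with how much each contributes to $o(s)$, form a set where choosing the cut is equivalent to choosing an element of a $(\lambda_{t+1}+1 - (\text{internal ascents}) - (\text{internal descents}))$-element set, and that incrementing within this set changes $o(s)$ by $0$ or $1$ in a controlled pattern — ultimately yielding that the total number of words $s\in\PPat_\lambda^{-1}(\tau)$ with $o(s) = m$, summed appropriately, is governed by which subset of the $k$ blocks ``uses up a unit of slack.'' Concretely I would argue: for $\tau \in \C_\lambda(m-j)$, each of the $k$ blocks independently can either be cut ``tightly'' (contributing its baseline to $o(s)$, matching the descent count $|\Des(\tau)\setminus S(\lambda)|$ restricted to that block plus the contribution of the boundary descent at $s_t(\lambda)$) or cut with ``one unit of looseness'' raising $o$ by one; choosing $j$ of the $k$ blocks to be loose gives exactly $\binomial{k}{j}$ words $s$ with $o(s) = (m-j) + j = m$, and all such $s$ lie in $N_\lambda^{(m)}$. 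This simultaneously shows the map is surjective onto $\bigcup_{j=0}^k \C_\lambda(m-j)$ and that each fiber over $\tau\in\C_\lambda(m-j)$ has size $\binomial{k}{j}$.

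The main obstacle I anticipate is making the per-block analysis both correct and clean: verifying precisely that the admissible window of cut points in a unimodal block has the right size and that the induced values of $o(s)$ take each of the values $\{\text{baseline}, \text{baseline}+1, \dots\}$ the correct number of times — in particular confirming it is a $0/1$ jump (i.e., only ``tight'' versus ``one-loose'' matters for the parity/count, not multi-unit slack), which is where the unimodality of each segment and the definition of $S(\lambda)$ (so that the cuts $e_{2t}$ at partial sums are never ``internal'' descents) must be used carefully. I would also need the non-primitivity caveat: when $[s]$ is the square of a primitive $q$ with $o(q)$ odd, the cut-point description still applies to a half-length representative, and one checks the counts $\binomial{k}{j}$ are unaffected since the two halves are cut identically. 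Handling that edge case and the boundary descents at positions in $S(\lambda)$ are the two places where I expect the write-up to need the most care.
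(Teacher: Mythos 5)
Your proposal is correct and follows essentially the same route as the paper: you parameterize the fiber over $\tau$ by the choice of the odd cut points $e_{2t+1}$, and your admissible-window count (internal ascents plus internal descents being $\lambda_{t+1}-1$) reduces to exactly two choices per block --- the peak joining the increasing or the decreasing part --- so the baseline $o(s)=|\Des(\tau)\setminus S(\lambda)|=m-j$ plus a unit of slack in $j$ of the $k$ blocks gives the $\binomial{k}{j}$ preimages, which is precisely the paper's argument. The details you flag (sign of the increment, the $2$-periodic case) resolve routinely and are treated no more explicitly in the paper's own proof.
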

\begin{proof}
	Let $\tau \in C_\lambda(m-j)$. For some $\pi = \pi_1\pi_2\ldots \pi_n$, we have $\tau = \pihat$. By Lemma \ref{lem:map PPat},   
	there is a word $s = s_1s_2\ldots s_n \in N_\lambda$ so that $\PPat([s]) = \pihat$. 
	We obtain this word by finding a sequence $0= e_0\leq e_1\leq \cdots \leq e_{2k} = n$ so that the segment 
	$\pihat_{e_t+1}\ldots \pihat_{e_{t+1}}$ is increasing if $t$ is even and decreasing if $t$ is odd. However, this 
	sequence is not unique. Certainly, we must have that  $e_{2t+2}-e_{2t}= \lambda_{t+1}$ for all $0\leq t \leq k-1$ 
	and so $e_{2t}$ is fixed for every $ 0 \leq t \leq k$. However, for $e_{2t+1}$, we have exactly two choices for all 
	$0\leq t \leq k-1$. This is because the unimodal segment of length $\lambda_t \neq 0$ can be broken up in 
	exactly two ways, where the peak could be included in either the increasing segment or the decreasing segment. 
	
	Suppose $\tau = \pihat$ has exactly $d$ descents, not including those from $S(\lambda)$. Then we must have 
	at least $d$ odd letters. For example, 
	in a unimodal permutation $24587631$, there are 4 descents and the length of the decreasing segment is either 4 or 5. 
	Choose $j$ of the $k$ peaks to belong to decreasing part of the unimodal segments. This will add exactly 
	$j$ odd letters to the minimum number, resulting in $m=d+j$ odd letters in a given representative $s\in[s]$. 
	Clearly, there are $\binomial{\min\{k,m\}}{j}$ ways to choose $j$ peaks.
\end{proof}

\begin{corollary}
\label{cor:Nlambda}
	Suppose $\lambda$ is a composition of $n$ with $k$ parts and $0\leq m\leq n$. Then,
	\[
		\bigl| N_\lambda^{(m)} \bigr| = \sum_{j = 0}^k \binomial{k}{j} \al{\lambda}{m-j}.
	\]
\end{corollary}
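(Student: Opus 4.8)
The plan is to read the corollary off directly from the preceding Lemma by counting $N_\lambda^{(m)}$ fiber-by-fiber over the map $\PPat_\lambda^{(m)}$. First I would record the elementary observation that the sets $\C_\lambda(m-j)$, for $0\le j \le k$, are pairwise disjoint: a cycle $\tau \in \C(\lambda)$ lies in $\C_\lambda(i)$ precisely when $|\Des(\tau)\setminus S(\lambda)| = i$, and this statistic pins down $i$ uniquely, so $\tau$ belongs to at most one of $\C_\lambda(m), \C_\lambda(m-1), \ldots, \C_\lambda(m-k)$. Consequently the codomain $\bigcup_{j=0}^k \C_\lambda(m-j)$ of $\PPat_\lambda^{(m)}$ is a genuine disjoint union, and the preimages of its elements under $\PPat_\lambda^{(m)}$ form a partition of $N_\lambda^{(m)}$.

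Next I would invoke the Lemma: $\PPat_\lambda^{(m)}$ is surjective onto this disjoint union, and for each $\tau \in \C_\lambda(m-j)$ the fiber $\big(\PPat_\lambda^{(m)}\big)^{-1}(\tau)$ has size exactly $\binomial{k}{j}$. Summing fiber sizes over the partition then yields
\[
	\bigl| N_\lambda^{(m)} \bigr|
	= \sum_{j=0}^k \sum_{\tau \in \C_\lambda(m-j)} \Bigl| \big(\PPat_\lambda^{(m)}\big)^{-1}(\tau) \Bigr|
	= \sum_{j=0}^k \sum_{\tau \in \C_\lambda(m-j)} \binomial{k}{j}
	= \sum_{j=0}^k \binomial{k}{j}\, \bigl| \C_\lambda(m-j) \bigr|
	= \sum_{j=0}^k \binomial{k}{j}\, \al{\lambda}{m-j},
\]
which is exactly the claimed identity.

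The only bookkeeping point worth a sentence is that when $m-j<0$ the set $\C_\lambda(m-j)$ is empty, so $\al{\lambda}{m-j}=0$ and those terms drop out harmlessly; this matches the fact that a representative with $o(s)=m$ can only map to a cycle with between $\max(0,m-k)$ and $m$ descents outside $S(\lambda)$, so extending the index range to all of $0\le j\le k$ changes nothing. Since the whole argument reduces to the Lemma together with the disjointness of the $\C_\lambda(m-j)$, I do not expect any real obstacle here — the one thing to be careful about is precisely that disjointness, which is what legitimizes the "partition into fibers" step and hence the double sum above.
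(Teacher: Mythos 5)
Your argument is correct and is exactly how the paper intends the corollary to be read: the paper gives no separate proof, treating it as an immediate consequence of the preceding lemma via the same fiber-counting you carry out (surjectivity plus fibers of size $\binomial{k}{j}$ over the pairwise disjoint sets $\C_\lambda(m-j)$). Your added remarks on disjointness and on the vanishing of terms with $m-j<0$ are sound bookkeeping and introduce no gap.
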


Using the above relationship, we can use generating functions to find an equation for $\al{\lambda}{m}$ in terms of 
$\bigl| N_\lambda^{(m)} \bigr|$. 

\begin{lemma} 
\label{lem:2}
	For a composition $\lambda$ of $n$ with $k$ parts and for $0\leq m\leq n$, 
	\begin{equation}
	\label{eq:odd al} 
		\al{\lambda}{m} =  \sum_{j=0}^{m} (-1)^{m-j} {{m-j+k-1}\choose{k-1}} \bigl| N_\lambda^{(j)} \bigr|.
	\end{equation}
\end{lemma}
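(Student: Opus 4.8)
The plan is to recognize Corollary~\ref{cor:Nlambda} as a binomial convolution and to invert it with generating functions. Define the polynomials
\[
	A(x) = \sum_{m \ge 0} \al{\lambda}{m}\, x^m, \qquad B(x) = \sum_{m \ge 0} \bigl|N_\lambda^{(m)}\bigr|\, x^m.
\]
These are genuine polynomials (and the convention $\al{\lambda}{m-j} = 0$ for $m-j<0$ used implicitly in Corollary~\ref{cor:Nlambda} is built into the bookkeeping), so there are no convergence issues; alternatively one may work in $\Q[[x]]$. The first step is to translate Corollary~\ref{cor:Nlambda}: multiplying the identity $\bigl|N_\lambda^{(m)}\bigr| = \sum_{j=0}^{k}\binomial{k}{j}\al{\lambda}{m-j}$ by $x^m$ and summing over $m$, then reindexing and applying the Binomial Theorem, gives the compact relation
\[
	B(x) = (1+x)^k\, A(x).
\]

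Since $(1+x)^k$ has nonzero constant term it is a unit in $\Q[[x]]$, so $A(x) = (1+x)^{-k} B(x)$. The second step is to expand $(1+x)^{-k}$ by the generalized binomial theorem,
\[
	(1+x)^{-k} = \sum_{i \ge 0} \binom{-k}{i} x^i = \sum_{i \ge 0} (-1)^i \binomial{i+k-1}{i}\, x^i,
\]
using the standard rewriting $\binom{-k}{i} = (-1)^i\binomial{i+k-1}{i}$. The final step is to read off the coefficient of $x^m$ in the product $(1+x)^{-k}B(x)$: pairing the $x^{m-j}$ term of $(1+x)^{-k}$ with the $x^j$ term of $B(x)$ for $0 \le j \le m$ yields exactly
\[
	\al{\lambda}{m} = \sum_{j=0}^{m} (-1)^{m-j} \binomial{(m-j)+k-1}{m-j}\, \bigl|N_\lambda^{(j)}\bigr|,
\]
which is Equation~\eqref{eq:odd al}. (The upper limit is $m$, not $n$, precisely because the index $j$ is constrained by $m-j \ge 0$.)

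I do not expect a real obstacle here: the only points needing care are checking that the translation of Corollary~\ref{cor:Nlambda} into $B(x) = (1+x)^k A(x)$ is done with the correct reindexing, and confirming the negative-binomial-coefficient identity, both routine. If one wished to avoid generating functions entirely, one could instead substitute Corollary~\ref{cor:Nlambda} directly into the right-hand side of \eqref{eq:odd al} and collapse the resulting double sum using $\sum_{i}(-1)^i\binomial{i+k-1}{i}\binomial{k}{\ell-i} = [\ell = 0]$; but that identity is itself just coefficient extraction from $(1+x)^{-k}(1+x)^k = 1$, so the generating-function argument is the cleanest packaging.
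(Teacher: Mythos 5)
Your proposal is correct and matches the paper's own argument: both translate Corollary~\ref{cor:Nlambda} into the generating-function identity $\sum_m |N_\lambda^{(m)}|x^m = (1+x)^k\sum_m \al{\lambda}{m}x^m$, multiply by $(1+x)^{-k}$, and extract the coefficient of $x^m$ using the expansion $(1+x)^{-k}=\sum_{j\ge 0}(-1)^j\binomial{j+k-1}{k-1}x^j$. No substantive differences.
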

\begin{proof}
	We have a formula for $\bigl| N_\lambda^{(m)} \bigr|$ in terms of $\al{\lambda}{m}$ by Corollary~\ref{cor:Nlambda}. Since 
	${{k}\choose{j}} =0$ when $j>k$, we can rewrite it as 
	\[
		\bigl| N_\lambda^{(m)} \bigr| = \sum_{j = 0}^m {{k}\choose{j}} \al{\lambda}{m-j}.
	\]
	We can write this relationship in terms of the generating functions for $\bigl| N_\lambda^{(m)} \bigr|$ and $\al{\lambda}{m}$.
	\[	
		\sum_{m\geq 0} \bigl| N_\lambda^{(m)} \bigr| x^m= \Bigg(\sum_{j=0}^k {{k}\choose{j}}x^j\Bigg) 
		\Bigg(\sum_{m\geq 0} \al{\lambda}{m}x^m\Bigg) =(1+x)^k \Bigg(\sum_{m\geq 0} \al{\lambda}{m}x^m\Bigg).
	\]
	By multiplying both sides of this equation by $(1+x)^{-k}$, it follows that
	\[
		\sum_{m\geq 0} \al{\lambda}{m} x^m= \Bigg(\sum_{j\geq 0} (-1)^j {{j+k-1}\choose{k-1}} x^j\Bigg) 
		\Bigg(\sum_{m\geq 0}  \bigl| N_\lambda^{(m)} \bigr| x^m\Bigg).
	\] 
	Equation~\eqref{eq:odd al} follows. 
\end{proof}

\section{Proof of the main result}\label{sec:pf}

Notice that using Equation \eqref{eq:chi} and the definition of $\al{\lambda}{m}$, we can rewrite the statement of 
Theorem \ref{thm:main} in the following way:
\begin{equation}
\label{eq:re}
	\sum_{m= 0}^{n-k} (-1)^m \al{\lambda}{m} =  \begin{cases}  (k-1)! d^{k-1} \mu(d)& \text{if }\lambda = (d^k), \\ 
	0 & \text{otherwise.} \end{cases}
\end{equation}
Denoting $d = \gcd(\lambda_1,\lambda_2,\cdots, \lambda_k)$, we will prove two cases in the next two theorems, when $d$ is odd 
and when $d$ is even. Combining the following Theorems \ref{thm:dodd} and \ref{thm:deven} will give us a proof of Equation 
\eqref{eq:re} and thus a proof of Theorem \ref{thm:main}.
\begin{theorem}
\label{thm:dodd}
	If $d = \gcd(\lambda_1,\cdots,\lambda_k)$ is odd, then
	\begin{equation}
	\label{eq:dodd} 
		\sum_{m = 0}^{n-k} (-1)^m \al{\lambda}{m}  = \begin{cases}  (k-1)! d^{k-1} \mu(d)& \text{if }\lambda = (d^k), \\ 
		0 & \text{otherwise.} \end{cases}
	\end{equation}
\end{theorem}
\begin{proof}
	Using Lemma \ref{lem:2}, we can expand the left hand side of Equation~\eqref{eq:dodd} by plugging in the right 
	hand side of Equation~\eqref{eq:odd al} for $\al{\lambda}{m}$. By Lemma \ref{lem:N=bigL}, we know that when 
	$d$ is odd, $\bigl| N_\lambda^{(j)} \bigr| = \bigL{\lambda}{j}$. We switch the order of summation which allows us 
	to simplify the equation to a single sum. The binomial identity used here to simplify the equation is easily checked. 
	\[
		\sum_{j = 0}^{n-k} \sum_{m=j}^{n-k} (-1)^{j} {{m-j+k-1}\choose{k-1}} \bigL{\lambda}{j} = 
		\sum_{j = 0}^{n-k} (-1)^{j} {{n-j}\choose{k}} \bigL{\lambda}{j}.
	\]
	Notice that since Lemma \ref{lem:bigL} implies $\bigL{\lambda}{n-j} = \bigL{\lambda}{j}$, then we can perform a 
	change of variables by replacing $j$ with $n-j$ to obtain the following formula. We then expand this formula using 
	Equations \eqref{eq:L} and \eqref{eq:bigL}.
	 \begin{align*}
	 	\sum_{j = k}^{n} (-1)^{n-j} &{{j}\choose{k}} \bigL{\lambda}{j} \\
			&= \sum_{j = 1}^{n} (-1)^{n-j} {{j}\choose{k}} \sum_{\substack{0\le i_t\leq \lambda_t \\ i_1+\cdots +i_k = j}}
				\frac{1}{n}\sum_{\ell \mid \gcd(d,i_1, \cdots, i_k)} \mu(\ell) \frac{(n/\ell)!}{(\frac{\lambda_1-i_1}{\ell})! 
				(\frac{i_1}{\ell})!\cdots (\frac{\lambda_k-i_k}{\ell})!(\frac{i_k}{\ell})!}.
	\end{align*}
	Notice that for convenience, we write the sum on the right starting at 1. This does not change the formula 
	since $\binomial{j}{k} = 0$ whenever $j<k$. 
	  
	Since $\ell \mid\gcd(d,i_1,i_2, \cdots, i_k)$, certainly $\ell\mid d$ and thus $\ell$ must always be odd. Also, for 
	any fixed $\ell$ and for any choice of $i_1, \ldots, i_k$, we have that $\ell\mid i_t$ for $1\leq t \leq k$ and thus 
	$\ell \mid j$. We can rewrite the above formula, now moving the sum over $\ell\mid d$ to the front. We make the 
	following substitutions for the indices in our equation, letting $i$, $r$, $a_t$, and $\gamma_t$ be such that 
	$j = \ell i$, $n = \ell r$, $i_t = \ell a_t$, $\lambda_t = \ell\gamma_t$ for all $1\leq t\leq k$. After these substitutions, 
	our resulting formula is the following:
	\[
		\frac{1}{n} \sum_{\ell\mid d}  \mu(\ell)  \sum_{i = 1}^{r} (-1)^{\ell r-\ell i} {{\ell i}\choose{k}} 
		\sum_{\substack{0\le a_t\leq \gamma_t \\ a_1+\cdots + a_k = i}}
		\frac{(r)!}{(\gamma_1-a_1)!(a_1)!\cdots (\gamma_\ell-a_k)!(a_k)!}.
	\]
	We simplify this formula by noticing first that the right-most sum is equal to 
	$\frac{(r)!}{\gamma_1!\cdots \gamma_k!} {{r}\choose{i}}$ by Lemma \ref{lem:long}. Also, since $\ell$ is odd, we 
	have $(-1)^{\ell r - \ell i} = (-1)^{i+r}$ We therefore obtain:
	\[
		\frac{1}{n} \sum_{\ell\mid d}  \mu(\ell) \frac{(r)!}{\gamma_1!\cdots \gamma_k!} \sum_{i = 1}^{r} (-1)^{i+r} 
		{{\ell i}\choose{k}} {{r}\choose{i}}.
	\]
	Since ${{\ell i}\choose{k}}$ is a degree $k$ polynomial in $i$, the right-most sum is zero when $r > k$ by 
	Lemma \ref{lem:bincoef}. When $\frac{n}{\ell} = r \leq k$, we have $n \leq \ell k$, but $\ell \mid d$ and in turn 
	$d \mid \lambda_t$ for all $1\leq t \leq k$ where $\lambda_1+\cdots +\lambda_k = n$. Therefore, we must have 
	that $\ell = d = \lambda_t$ for all $1\leq t \leq k$. It follows that if $\lambda \neq (d^k)$, then 
	$\sum_{m=0}^{n-k} (-1)^m \al{\lambda}{m} = 0$.  
	
	If $\lambda = (d^k)$, then by the above argument, we must have that $r = k$ and $\ell = d$. 
	Substituting these values into the formula gives us:
	\[
		\mu(d)  \frac{(k)!}{k d} \sum_{i = 1}^{k} (-1)^{i+k} {{di}\choose{k}} {{k}\choose{i}}.
	\]
	By Lemma \ref{lem:incex}, we know that $\sum_{i = 1}^{k} (-1)^{i+k} {{di}\choose{k}} {{k}\choose{i}} = d^k.$ 
	Equation \eqref{eq:dodd} follows.
\end{proof}

\begin{theorem}
\label{thm:deven}
	If $d = \gcd(\lambda_1,\cdots, \lambda_k)$ is even, then
	\begin{equation}
	\label{eq:deven}
		\sum_{m = 0}^{n-k} (-1)^m \al{\lambda}{m} = \begin{cases} (k-1)! \cdot d^{k-1} \mu(d) & \text{if } \lambda = (d^k), \\ 
		0 & \text{otherwise.} \end{cases}
	\end{equation}
\end{theorem}
\begin{proof}
	Let $\lambda' = \lambda/2$, the composition of $n/2$ of length $k$ with $\lambda'_i = \lambda_i/2$. We use 
	Lemmas \ref{lem:N=bigL} and \ref{lem:2} to expand Equation \eqref{eq:deven}.
	\begin{equation}
	\label{eq}
		\sum_{m = 0}^{n-k}  \sum_{j=0}^{m} (-1)^{j} {{m-j+k-1}\choose{m-j}} \bigL{\lambda}{j} + 
		\sum_{m = 0}^{n-k} \sum_{\substack{0\le i \leq m \\ i \equiv 2 \bmod 4}} {{m-i+k-1}\choose{m-i}} 
		\bigL{\lambda'}{\frac{i}{2}}.
	\end{equation}
	Notice that the first sum in Equation \eqref{eq} looks similar to the formula in the proof of 
	Theorem \ref{thm:dodd}. Through the same steps, we find that it can be 
	written as:
	\[
		\frac{1}{n} \sum_{\ell\mid d}  \mu(\ell) \frac{(r)!}{\gamma_1!\cdots \gamma_k!} 
		\sum_{i = 1}^{r} (-1)^{\ell i} {{\ell i}\choose{k}} {{r}\choose{i}}.
	\]
	From this point, the proof is different from the proof of Theorem \ref{thm:dodd}, because $\ell$ can be even. 
	We split the sum into the two cases when $\ell$ is either even or odd.
	\[
		\frac{1}{n} \sum_{\substack{\ell\mid d\\ \ell \text{ even}}}  \mu(\ell) \frac{(r)!}{\gamma_1!\cdots \gamma_k!} 
		\sum_{i = 1}^{r} {{\ell i}\choose{k}} {{r}\choose{i}}  + \frac{1}{n} 
		\sum_{\substack{\ell\mid d\\ \ell \text{ odd}}} \mu(\ell) \frac{(r)!}{\gamma_1!\cdots \gamma_k!} 
		\sum_{i = 1}^{r} (-1)^{i} {{\ell i}\choose{k}} {{r}\choose{i}}. 
	\]
	As before, by Lemma \ref{lem:bincoef} the second sum is zero except possibly when $r = k$ and $\ell=d$. 
	However $d$ is even and $\ell$ is odd, so this can never be the case. Therefore, the second sum must always 
	be zero. For reasons we'll see later, let $n' = n/2$, $d' = d/2$ and $\ell' = \ell/2$. We only need to consider $\ell'$ odd, 
	since if it were even, $\mu(\ell) = 0$. If $\ell'$ odd, then we have that $\mu(\ell) = \mu(2)\mu(\ell') = -\mu(\ell')$. 
	Therefore, the first sum in Equation \eqref{eq} can now be written as:
	\begin{equation}
	\label{eq:lhs}
		-\frac{1}{n} \sum_{\substack{\ell'\mid d'\\ \ell' \text{ odd}}}  \mu(\ell') \frac{r!}{\gamma_1!\cdots \gamma_k!} 
		\sum_{i = 1}^{r} {{2\ell'i}\choose{k}} {{r}\choose{i}}. 
	\end{equation}
	Now, consider the second sum in equation \eqref{eq}. 
	As usual, we change the order of summation to simplify the equation to one summation. For simplicity, we change 
	our variable from $j$ to $(n-i)/2$. Let $n' = n/2$. Using Lemma \ref{lem:bigL}, which says that 
	$\bigL{\lambda'}{j} = \bigL{\lambda'}{n'-j}$, we obtain the following formula:
	\[
		\sum_{\substack{0\le j \leq n' \\ n'-j \text{ odd}}} {{2j}\choose{k}} \bigL{\lambda'}{j}.
	\]
	There are two cases: when $n'$ is even and when $n'$ is odd. If $n'$ is even, $j$ is odd and if $n'$ is odd, $j$ is even. 
	The two cases are very similar with only slight changes in a few details. Here, we will do the case when $n'$ is even. 
	The sum above can be rewritten as a sum over odd $j$. Using Equations \eqref{eq:L} and \eqref{eq:bigL}, we can expand 
	this formula. 
	\[
		\sum_{\substack{0\le j \leq n' \vspace{.05cm}  \\ j \text{ odd}}} {{2j}\choose{k}} 
		\sum_{\substack{0\le i_t\leq \lambda'_t \\ i_1+\cdots +i_k = j}}\frac{1}{n'}
		\sum_{\ell \mid \gcd(d',i_1, \cdots, i_k)} \mu(\ell) 
		\frac{(n'/\ell)!}{(\frac{\lambda'_1-i_1}{\ell})!(\frac{i_1}{\ell})! \cdots (\frac{\lambda'_k-i_k}{\ell})!(\frac{i_k}{\ell})!}.
	\]
	Since $\ell \mid\gcd(d',i_1,i_2, \cdots, i_k)$, certainly $\ell\mid d'$. Also, for any fixed $\ell$ and for any choice 
	of $i_1, \ldots, i_k$, we have that $\ell\mid i_t$ for $1\leq t \leq k$ and thus $\ell \mid j$. Since $j$ is always odd, 
	we must also always have that $\ell$ is odd. We can rewrite the above formula, now moving the sum over odd 
	$\ell\mid d$ to the front. We make the following substitutions for the indices in our equation, letting $i$, $r$, $a_t$, 
	and $\gamma_t$ be such that $j = \ell i$, $n' = \ell r$, $i_t = \ell a_t$, $\lambda'_t = \ell\gamma_t$ for all 
	$1\leq t\leq k$. After these substitutions, our resulting formula is the following:
	\[
		\sum_{\substack{\ell\mid d' \\ \ell \text{ odd}}} 
		\sum_{\substack{ 0\le i \leq r \vspace{.05cm} \\ i \text{ odd}}} {{2\ell i}\choose{k}} \mu(\ell)  \frac{1}{n'} 
		\sum_{\substack{0\le a_t\leq \gamma_t \\ a_1+\cdots + a_k = i}}
		\frac{(r)!}{ (\gamma_1-a_1)!(a_1)!\cdots (\gamma_k-a_k)!(a_k)!}
	\]
	We simplify this formula by noticing first that the right-most sum is equal to 
	$\frac{(r)!}{\gamma_1!\cdots \gamma_k!} {{r}\choose{i}}$ by Lemma \ref{lem:long}. We obtain the following formula 
	for the right hand sum of Equation \eqref{eq}:
	\begin{equation}
	\label{eq:rhs}
		\frac{1}{n'} \sum_{\substack{\ell\mid d' \\ \ell \text{ odd}}}  \mu(\ell) \frac{(r)!}{\gamma_1!\cdots \gamma_k!} 
		\sum_{\substack{0\le  i \leq r \\ i \text{ odd}}} {{2\ell i}\choose{k}}  {{r}\choose{i}}.
	\end{equation}
	Finally, we combine the two summations from Equation \eqref{eq} which we have found to be equal to 
	Equations \eqref{eq:lhs} and \eqref{eq:rhs}. After combining like terms, we obtain the formula:
	\begin{align*}
		\frac{1}{n'} \sum_{\substack{\ell\mid d'\\ \ell \text{ odd}}}  \mu(\ell) \frac{r!}{\gamma_1!\cdots \gamma_k!} \cdot 
		& \frac{1}{2}\left[ 2\sum_{\substack{i \text{ odd}\\ i \leq r}} {{2\ell i}\choose{k}}  {{r}\choose{i}}- 
		\sum_{i = 1}^{r} {{2\ell i}\choose{k}} {{r}\choose{i}}\right] \\ 
		& =  \sum_{\substack{\ell\mid d'\\ \ell \text{ odd}}}  \mu(\ell)  \frac{r!}{\gamma_1!\cdots \gamma_k!} 
		 \cdot  \frac{1}{2\ell r}  \left[ - \sum_{i = 1}^{r} (-1)^i {{2\ell i}\choose{k}} {{r}\choose{i}}\right] .
	\end{align*}
	As before, by Lemma \ref{lem:bincoef}, the right most term vanishes except when $r \leq k$. Since 
	$n'/\ell =r\leq k$, we have $n' \leq \ell k$. But $\ell \mid d'$ and $d'\mid \lambda'_t$ for all $1\leq t \leq k$ where 
	$\sum_t\lambda'_t = n'$. Therefore, we must have that $\ell = d' = \lambda'_t$ for all $1\leq t \leq k$. It follows that 
	$\lambda' = (d'^k)$ and therefore we must have that $\lambda = (d^k)$. It follows that if $\lambda \neq (d^k)$, then 
	the sum $\sum_{m=0}^{n-k}(-1)^m \al{\lambda}{m} = 0$.  
	
	Suppose now that $\lambda = (d^k)$. Notice that $d' = \ell$ is odd. 
	For $d'$ odd, we thus obtain:
	\[
		- \frac{1}{2kd'}  \mu(d') k! \cdot \sum_{i = 1}^{k} (-1)^i {{2d'i}\choose{k}} {{k}\choose{i}} =  
		\frac{1}{kd}  \mu(d) k! \cdot \sum_{i = 1}^{k} (-1)^i {{di}\choose{k}} {{k}\choose{i}}. 
	\]
	Recall we are dealing with the case when $n'$ is even and thus $d'$ is odd. Therefore, we must have $k$ even. 
	Therefore, by Lemma \ref{lem:incex}, the rightmost sum is $d^k$. Equation \eqref{eq:deven} follows. 
\end{proof}

\section{Distribution of the descent set of $\C_n$} \label{sec:cons}

In this section, we prove some consequences of Theorem \ref{thm:main} using representation theory. We will let 
$\mathcal{U}(\lambda)$ denote the set of $\lambda$-unimodal permutations.

Recall the definitions of $\rho$ and $\chi$ from Section \ref{sec:chi}. We will show that Theorem \ref{thm:main} implies that the descent sets of elements of $\C_n$ are equi-distributed 
with the descent sets of the standard Young tableaux that form a basis to the representation $\rho$. That is to say, 
for any given $D\subseteq[n-1]$, 
\[
	|\{\pi \in \C_n : \Des(\pi) = D\}| = |\{T \in \mathcal{B}_\rho : \Des(T) = D\}|
\]
where $\mathcal{B}_\rho$ is the basis of representation $\rho$ and the descent set of a given standard Young tableau 
$T$ is defined to be $\Des(T) = \{1\leq i \leq n-1 : i+1 \text{ lies strictly south of } i\}$. 

To prove this, we must first introduce a few definitions from \cite{char}. We say that a subset $D\subset[n-1]$ is 
\emph{$\lambda$-unimodal} if $D\setminus S(\lambda)$ is the disjoint union of intervals of the form 
$[s_{t-1}(\lambda) + \ell_t, s_{t}(\lambda)]$ where $1\leq \ell_t \leq \lambda_t$  for all $1\leq t\leq k$. Note that a 
permutation $\pi\in\S_n$ is $\lambda$-unimodal if and only if its descent set $D$ is $\lambda$-unimodal. 

Consider a given set of combinatorial objects, $\mathcal{B}$, and descent map 
$\Des: \mathcal{B} \to \mathcal{P}([n-1])$ which sends each element $b\in \mathcal{B}$ to a subset 
$\Des(B) \subseteq [n-1]$. If $\rho'$ is some complex representation of $\S_n$, the we call $\mathcal{B}$ a 
\emph{fine set} for $\rho'$ if the character of $\rho'$ satisfies: 
\[
	\chi^{\rho'}_\lambda = \sum_{b\in \mathcal{B}^\lambda} (-1)^{|\Des(b)\setminus S(\lambda)|} 
\]
where $\mathcal{B}^\lambda$ are the elements of $\mathcal{B}$ whose descent set is $\lambda$-unimodal. For 
example, Theorem \ref{thm:main} proves that $\C_n$ is a fine set for the representation $\rho$ defined above.

The following two propositions from \cite{char} will be useful.

\begin{proposition}[{\cite[Cor. 6.7]{char}}]
\label{prop:cor6.7}
	If sets $\mathcal{B}_1$ and $\mathcal{B}_2$ are both fine sets for the same representation, then their descent 
	sets are equi-distributed.
\end{proposition}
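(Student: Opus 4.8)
The statement to prove is Proposition~\ref{prop:cor6.7} (quoted from \cite{char}): if $\mathcal{B}_1$ and $\mathcal{B}_2$ are both fine sets for the same representation $\rho'$, then the distributions of $\Des$ on $\mathcal{B}_1$ and $\mathcal{B}_2$ agree. Since this is imported verbatim from \cite{char}, the natural plan is to reconstruct the short linear-algebra argument that underlies it rather than to invent something new. First I would recall that being a fine set is a statement purely about the character $\chi^{\rho'}_\lambda$, expressed as a signed count over the $\lambda$-unimodal-descent elements of $\mathcal{B}_i$. The key observation is that the right-hand side of the fine-set identity depends on $\mathcal{B}_i$ only through the numbers $\beta^{(i)}_D = |\{b \in \mathcal{B}_i : \Des(b) = D\}|$ for $D \subseteq [n-1]$: indeed,
\[
	\chi^{\rho'}_\lambda = \sum_{D \text{ $\lambda$-unimodal}} (-1)^{|D \setminus S(\lambda)|}\, \beta^{(i)}_D .
\]

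The plan is then to show that this system of linear equations, as $\lambda$ ranges over all compositions of $n$, determines the vector $(\beta_D)_{D \subseteq [n-1]}$ uniquely, so that $\beta^{(1)}_D = \beta^{(2)}_D$ for all $D$. I would index both sides by subsets $D \subseteq [n-1]$: the unknowns are the $2^{n-1}$ multiplicities $\beta_D$, and for each composition $\lambda$ we get one equation whose coefficients are $(-1)^{|D\setminus S(\lambda)|}$ when $D$ is $\lambda$-unimodal and $0$ otherwise. Since compositions of $n$ correspond bijectively to subsets $S(\lambda) \subseteq [n-1]$, we have exactly $2^{n-1}$ equations, i.e.\ a square system. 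The claim reduces to showing the corresponding $2^{n-1} \times 2^{n-1}$ matrix $M$, with $M_{S, D} = (-1)^{|D \setminus S|}\cdot[\,D \text{ is $\lambda$-unimodal for the composition with partial-sum set }S\,]$, is invertible. This is where I would either cite the precise lemma in \cite{char} that establishes invertibility (or triangularity after a suitable reordering of rows/columns), or, to keep the paper self-contained, order the subsets $D$ by inclusion and argue that $M$ becomes triangular with $\pm1$ on the diagonal: when $S = D$, the set $D$ is trivially $D$-unimodal (each interval $[s_{t-1}+\ell_t, s_t]$ degenerates appropriately) and the diagonal entry is $(-1)^{|D \setminus D|} = 1$, while for the triangularity one checks that $D$ being $\lambda$-unimodal with $S(\lambda) = S$ forces a containment relation between $D$ and $S$ in the chosen order.

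The main obstacle is pinning down the exact combinatorial statement that makes the matrix $M$ invertible — in particular getting the ordering on subsets right so that the "$D$ is $\lambda$-unimodal" condition becomes an upper- or lower-triangularity condition, and verifying that no two distinct subsets give proportional rows. This is a genuinely finite, finicky bookkeeping step about the structure of $\lambda$-unimodal descent sets (the description "$D \setminus S(\lambda)$ is a disjoint union of intervals $[s_{t-1}(\lambda)+\ell_t, s_t(\lambda)]$" is what has to be exploited), and it is precisely the content of \cite[Cor.~6.7]{char} and its supporting lemmas. For the purposes of this paper, since the proposition is explicitly attributed to \cite{char}, I would state it as above and refer the reader there for the proof, noting only the conceptual point: a fine set's descent distribution is a solution to a fixed invertible linear system depending only on $\rho'$, hence is uniquely determined by $\rho'$. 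The subsequent application — that $\C_n$ (a fine set for $\rho$ by Theorem~\ref{thm:main}) and the standard-Young-tableaux basis $\mathcal{B}_\rho$ (a fine set for $\rho$ by the representation-theoretic construction) have equidistributed descent sets — then follows immediately.
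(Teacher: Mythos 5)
The paper itself gives no proof of this proposition: it is imported verbatim from \cite[Cor.~6.7]{char}, so your ultimate decision to state it with a citation matches what the paper does, and your opening reduction is the right conceptual frame --- the fine-set identity sees $\mathcal{B}_i$ only through the multiplicities $\beta^{(i)}_D=|\{b\in\mathcal{B}_i:\Des(b)=D\}|$, so equidistribution would follow once the $2^{n-1}\times 2^{n-1}$ system indexed by compositions $\lambda$ (equivalently by $S(\lambda)\cap[n-1]$) and by subsets $D\subseteq[n-1]$ is shown to be invertible.

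The gap is in the self-contained fallback you sketch: the matrix $M$ is \emph{not} triangular with respect to any order refining inclusion, because $\lambda$-unimodality of $D$ forces no containment between $D$ and $S(\lambda)$. Concretely, $D=\emptyset$ is $\lambda$-unimodal for every $\lambda$, so the column indexed by $\emptyset$ consists entirely of $+1$'s, while for $\lambda=(1^n)$ every $D\subseteq[n-1]$ is $\lambda$-unimodal, so the row indexed by $S=[n-1]$ consists entirely of $\pm1$'s; in particular the entries at $(S=[n-1],\,D=\emptyset)$ and at $(S=\emptyset,\,D=[n-1])$ (the latter coming from the decreasing, hence unimodal, permutation) are both nonzero, which rules out upper and lower triangularity simultaneously. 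The matrix does appear to be invertible (for $n=3$, with rows $(3),(1,2),(2,1),(1,1,1)$ and columns $\emptyset,\{1\},\{2\},\{1,2\}$, the determinant is $12$), but establishing that requires a genuinely different idea; the argument in \cite{char} runs instead through quasi-symmetric functions: a set $\mathcal{B}$ is fine for $\rho'$ precisely when $\sum_{b\in\mathcal{B}}F_{n,\Des(b)}$ equals the Frobenius characteristic of $\chi^{\rho'}$, where the $F_{n,D}$ are the fundamental quasi-symmetric functions, and linear independence of the $F_{n,D}$ then pins down the multiset of descent sets. So citing \cite{char}, as the paper does, is fine, but the triangularity sketch should not be offered as a viable self-contained alternative without repair.
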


\begin{proposition}[{\cite[Thm 2.1]{char}}]
\label{prop:thm2.1}
	Any Knuth class $\C$ of shape $\nu$ is a fine set for the irreducible representation $\S^\nu$ of $\S_n$.
\end{proposition}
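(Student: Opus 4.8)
The statement is quoted from \cite{char}, so I only sketch the strategy one would follow to establish it. The plan is to move the claim to standard Young tableaux through the Robinson--Schensted correspondence and then to read off both sides from the Frobenius characteristic. Recall that a Knuth class of shape $\nu$ is a set $\C=\{\pi\in\S_n:P(\pi)=P_0\}$ of permutations with a common insertion tableau $P_0\in\SYT(\nu)$, that $\pi\mapsto Q(\pi)$ (the recording tableau) is a bijection $\C\to\SYT(\nu)$, and that $\Des(\pi)=\Des(Q(\pi))$ under this bijection. Hence $\pi\in\C$ is $\lambda$-unimodal precisely when $\Des(Q(\pi))$ is a $\lambda$-unimodal subset, and summing the fundamental quasisymmetric function over $\C$, Gessel's theorem gives
\[
	\sum_{\pi\in\C}F_{\Des(\pi)} \;=\; \sum_{Q\in\SYT(\nu)}F_{\Des(Q)} \;=\; s_\nu \;=\; \mathrm{ch}(\S^\nu).
\]

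The heart of the matter is a reusable criterion: any set $\mathcal{B}$ equipped with a descent map for which $\sum_{b\in\mathcal{B}}F_{\Des(b)}$ equals the Frobenius characteristic of a representation $\rho$ is automatically a fine set for $\rho$, and the proposition is the instance $\mathcal{B}=\C$ of the display above. To prove the criterion, pair $\mathrm{ch}(\rho)=\sum_b F_{\Des(b)}$ against the power sum $p_\lambda$, using $\chi^\rho_\lambda=\langle\mathrm{ch}(\rho),p_\lambda\rangle$; everything then reduces, term by term over $b$, to the purely combinatorial identity
\[
	\langle F_D,\,p_\lambda\rangle \;=\; \begin{cases}(-1)^{|D\setminus S(\lambda)|}&\text{if $D\subseteq[n-1]$ is $\lambda$-unimodal},\\[1mm]0&\text{otherwise},\end{cases}
\]
where the pairing is the one between $\mathrm{QSym}$ and $\mathrm{NSym}$ for which $\{F_D\}$ is dual to the ribbon basis $\{R_\alpha\}$.

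I would prove this identity by expanding $p_\lambda=p_{\lambda_1}\cdots p_{\lambda_k}$ in $\mathrm{NSym}$. Each single power sum lifts to $\sum_{j=0}^{r-1}(-1)^j R_{(r-j,\,1^j)}$, and the composition $(r-j,1^j)$ has descent set equal to the final segment of $[r-1]$ of size $j$; multiplying the $k$ factors out by the rule $R_\alpha R_\beta=R_{\alpha\cdot\beta}+R_{\alpha\odot\beta}$ (concatenation plus near-concatenation) glues these $k$ ribbons along the $k-1$ joints, and one checks that a joint becomes a descent exactly in the concatenation case. Collecting the coefficient of a given $R_D$, one finds it vanishes unless $D$ restricts to a final segment inside every block $[s_{t-1}(\lambda)+1,s_t(\lambda)-1]$ --- that is, unless $D$ is $\lambda$-unimodal --- in which case the data $(j_1,\dots,j_k)$ and the choices at the joints are all forced, leaving the single term $(-1)^{j_1+\cdots+j_k}=(-1)^{|D\setminus S(\lambda)|}$, since $S(\lambda)$ consists exactly of the joint positions together with $n$.

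I expect the only real obstacle to be this bookkeeping: lining up the concatenation/near-concatenation dichotomy of $\mathrm{NSym}$ with the definition of a $\lambda$-unimodal set, so that a descent at a position of $S(\lambda)$ comes out \emph{free} (always allowed, contributing no sign) while a descent interior to a block both forces $D$ to be $\lambda$-unimodal and flips the sign. One can also carry out the same computation without $\mathrm{NSym}$, as a signed sum over the compositions refining $\lambda$, but then the cancellations have to be tracked by hand. Once the identity is in place, the criterion follows by summing over $\mathcal{B}$, and applying it to $\mathcal{B}=\C$ with the Frobenius-characteristic identity of the first step yields Proposition~\ref{prop:thm2.1}.
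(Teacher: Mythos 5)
This proposition is not proved in the paper at all: it is imported from \cite[Thm 2.1]{char} and used as a black box (in the proof of Proposition \ref{prop:basis is fine}), so there is no internal argument to compare yours against. Your sketch is a correct reconstruction of the standard proof underlying the cited result, and the two ingredients are the right ones: (i) the descent-preserving bijection $\pi\mapsto Q(\pi)$ from a Knuth class onto $\SYT(\nu)$ together with Gessel's identity $\sum_Q F_{\Des(Q)}=s_\nu=\mathrm{ch}(\S^\nu)$ (the paper itself uses this descent-preservation, with the same reference, in Proposition \ref{prop:basis is fine}); and (ii) the pairing lemma saying that pairing against $p_\lambda$ picks out $\lambda$-unimodal descent sets with sign $(-1)^{|D\setminus S(\lambda)|}$. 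Your ribbon computation for (ii) checks out: $\Psi_r=\sum_{j=0}^{r-1}(-1)^jR_{(r-j,1^j)}$, and the rule $R_\alpha R_\beta=R_{\alpha\cdot\beta}+R_{\alpha\odot\beta}$ makes each joint $s_t\in S(\lambda)$ a free (signless) choice while forcing the descents interior to each block to form a final segment, so the coefficient of $R_D$ in $\Psi_{\lambda_1}\cdots\Psi_{\lambda_k}$ is $(-1)^{|D\setminus S(\lambda)|}$ exactly when $D$ is $\lambda$-unimodal and $0$ otherwise. The one point you should make explicit is what ``$\langle F_D,p_\lambda\rangle$'' means: $F_D$ is only quasisymmetric, so the Hall pairing does not apply term by term; you must fix a lift of $p_\lambda$ to $\mathrm{NSym}$ (your $\Psi_\lambda$), note that the projection $\mathrm{NSym}\to\mathrm{Sym}$ is adjoint to the inclusion $\mathrm{Sym}\hookrightarrow\mathrm{QSym}$ so that $\langle f,p_\lambda\rangle=\langle f,\Psi_\lambda\rangle$ for \emph{symmetric} $f$, and only then distribute over the expansion $s_\nu=\sum_Q F_{\Des(Q)}$ (the individual pairings do depend on the choice of lift, though their sum does not). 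With that caveat spelled out, summing (ii) over the tableaux in (i) gives precisely $\chi^\nu_\lambda=\sum (-1)^{|\Des(\pi)\setminus S(\lambda)|}$ over $\lambda$-unimodal $\pi\in\C$, i.e.\ the fine-set property.
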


Here, a Knuth class is the set of permutations which result in the same insertion tableau when performing the 
Robinson--Schensted--Knuth (RSK) algorithm. For reference, see \cite{Stanley2}.

Denote by $B_\rho$ the set (or possibly, multiset) of standard Young tableaux which form a basis to the representation $\rho$. Recall that any representation of $\S_n$ can be written as the direct sum of some irreducible representations $\S^\nu$ (possibly with multiplicity); that is, $$\rho = \bigoplus_{\nu \in V} \S^\nu.$$ Then the basis of $\rho$ is then defined to be all tableaux $B_\rho = \{T \in \SYT(\nu) : \nu \in V\}$ with multiplicities.

\begin{proposition}\label{prop:basis is fine}
$B_\rho$ is a fine set. 
\end{proposition}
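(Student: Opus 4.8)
The plan is to reduce the statement to the case of a single irreducible and then invoke additivity of characters. Write $\rho = \bigoplus_{\nu \in V} \S^\nu$, where $V$ is a multiset of partitions of $n$ recording the irreducible constituents of $\rho$ with multiplicity, so that $B_\rho = \{ T \in \SYT(\nu) : \nu \in V \}$ with the same multiplicities. Since characters are additive over direct sums, $\chi_\lambda = \sum_{\nu \in V} \chi^{\S^\nu}_\lambda$ for every composition $\lambda$, the sum taken with multiplicity. Thus it suffices to prove that for each partition $\nu$ the set $\SYT(\nu)$, equipped with its standard descent map, is a fine set for $\S^\nu$, and then to sum.

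To show $\SYT(\nu)$ is a fine set for $\S^\nu$, I would start from Proposition \ref{prop:thm2.1}: fixing a Knuth class $\C^{(\nu)}$ of shape $\nu$ (the permutations with a given insertion tableau $P$ of shape $\nu$), it is a fine set for $\S^\nu$. The Robinson--Schensted correspondence restricts to a bijection $w \mapsto Q(w)$ between $\C^{(\nu)}$ and $\SYT(\nu)$, and one has $\Des(w) = \Des(Q(w))$ for all $w$ (see \cite{Stanley2}). Because both being $\lambda$-unimodal and the sign $(-1)^{|\Des(\,\cdot\,)\setminus S(\lambda)|}$ depend only on the descent set, this bijection carries $(\C^{(\nu)})^\lambda$ onto $\SYT(\nu)^\lambda$ and matches summands term by term. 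Hence
\[
	\chi^{\S^\nu}_\lambda = \sum_{w \in (\C^{(\nu)})^\lambda} (-1)^{|\Des(w)\setminus S(\lambda)|} = \sum_{T \in \SYT(\nu)^\lambda} (-1)^{|\Des(T)\setminus S(\lambda)|},
\]
which is exactly the fine-set identity for $\SYT(\nu)$ and $\S^\nu$. (Alternatively, one may phrase this step as: a descent-set-preserving bijection transports the fine-set property, a fact immediate from the definition of fine set.)

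Summing over $\nu \in V$ with multiplicity and using additivity of $\chi$ then gives
\[
	\chi_\lambda = \sum_{\nu \in V} \chi^{\S^\nu}_\lambda = \sum_{\nu \in V} \sum_{T \in \SYT(\nu)^\lambda} (-1)^{|\Des(T)\setminus S(\lambda)|} = \sum_{T \in B_\rho^\lambda} (-1)^{|\Des(T)\setminus S(\lambda)|},
\]
which is the assertion that $B_\rho$ is a fine set. I expect the only delicate point to be bookkeeping about conventions: which of the insertion or recording tableau carries $\Des(w)$, and checking that the notion of fine set used here genuinely depends only on the multiset of descent sets of the $\lambda$-unimodal elements (so that the RSK bijection transfers the property). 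Neither of these is a substantive obstacle once $\Des(w) = \Des(Q(w))$ is in hand; there is no hard combinatorial or representation-theoretic difficulty.
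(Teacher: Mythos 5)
Your proposal is correct and follows essentially the same route as the paper: invoke Proposition \ref{prop:thm2.1} for a Knuth class, transfer the sum to $\SYT(\nu)$ via the RSK recording tableau using $\Des(w)=\Des(Q(w))$, and conclude by additivity of characters over the decomposition $\rho=\bigoplus_{\nu\in V}\S^\nu$. Your version just spells out the descent-preserving bijection and the multiplicity bookkeeping a bit more explicitly than the paper does.
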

\begin{proof}
Suppose $\chi^\nu$ is the $\S_n$-character of the irreducible representation $\S^\nu$.
By Proposition \ref{prop:thm2.1}, we have that
$$
\chi^\nu_\lambda = \sum_{\pi \in \C\cap \mathcal{U}(\lambda)} (-1)^{|\Des(\pi)\setminus S(\lambda)|}
$$
where $\C$ is any Knuth class of shape $\nu$. In performing RSK, the descent set of the permutations is the same as the descent set of the recording tableau $Q$ (see \cite{schu}). Therefore, we can rewrite this sum over $\lambda$-unimodal permutations in a given Knuth class as a sum over all tableaux $Q$ of shape $\nu$ whose descent set is $\lambda$-unimodal.
$$
\chi^\nu_\lambda = \sum_{Q \in \SYT(\nu)\cap \mathcal{T}(\lambda)} (-1)^{|\Des(Q)\setminus S(\lambda)|}
$$
where $\mathcal{T}(\lambda)$ is the set of tableaux whose descent set is $\lambda$-unimodal  and $\SYT(\nu)$ is the set of standard Young tableaux of shape $\nu$.

Finally, $\rho$ can be written as the direct sum of some irreducible representations $\S^\nu$ and the character of the representation is the sum of the irreducible characters for the representations that appear in this direct sum. Therefore, the theorem follows.
\end{proof}

\begin{theorem}\label{thm:equi} 
The descent sets of elements of $\C_n$ are equi-distributed with the descent sets of the standard Young tableaux  in $B_\rho$.
\end{theorem}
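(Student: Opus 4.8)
The plan is to assemble the statement directly from the machinery already in place: I will identify $\C_n$ and $B_\rho$ as two fine sets for the same representation $\rho$ and then invoke the equidistribution criterion of Proposition \ref{prop:cor6.7}.

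First I would observe that Theorem \ref{thm:main} is precisely the assertion that $\C_n$, equipped with the usual descent map $\Des$, is a fine set for $\rho$. The one point to spell out is the bookkeeping of which permutations contribute: in the definition of a fine set the sum runs over $\mathcal{B}^\lambda$, the elements whose descent set is $\lambda$-unimodal, whereas Theorem \ref{thm:main} sums over $\C(\lambda)$, the $\lambda$-unimodal cyclic permutations. These index sets coincide because (as noted just after the definition of $\lambda$-unimodal descent sets) a permutation $\pi \in \S_n$ is $\lambda$-unimodal if and only if $\Des(\pi)$ is a $\lambda$-unimodal subset of $[n-1]$; hence $\C_n^\lambda = \C(\lambda)$, and the right-hand side of \eqref{eq:main} is exactly $\sum_{\pi \in \C_n^\lambda}(-1)^{|\Des(\pi)\setminus S(\lambda)|}$. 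Combined with Proposition \ref{prop:ind}, which guarantees $\chi_\lambda$ on the left of \eqref{eq:main} is the character value $\chi^\rho_\lambda$, this gives the fine-set identity for $\C_n$ verbatim.

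Next I would recall that Proposition \ref{prop:basis is fine} already establishes that $B_\rho$ is a fine set for $\rho$. With both $\C_n$ and $B_\rho$ now known to be fine sets for the same representation $\rho$, Proposition \ref{prop:cor6.7} applies immediately and yields that their descent sets are equi-distributed, i.e. for every $D \subseteq [n-1]$,
\[
	|\{\pi \in \C_n : \Des(\pi) = D\}| = |\{T \in B_\rho : \Des(T) = D\}|,
\]
which is the claim.

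There is no real obstacle here beyond the identification in the first step; the substantive content has been front-loaded into Theorem \ref{thm:main} and Propositions \ref{prop:basis is fine} and \ref{prop:cor6.7}. If anything, the only delicate point worth stating carefully is the equivalence ``$\pi$ is $\lambda$-unimodal $\iff$ $\Des(\pi)$ is $\lambda$-unimodal,'' so that the sum in \eqref{eq:main} matches the fine-set convention exactly; once that is in hand the proof is a one-line chain of citations.
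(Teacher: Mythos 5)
Your proposal is correct and follows exactly the paper's own argument: Theorem \ref{thm:main} makes $\C_n$ a fine set for $\rho$, Proposition \ref{prop:basis is fine} does the same for $B_\rho$, and Proposition \ref{prop:cor6.7} gives equidistribution. The extra care you take matching $\C(\lambda)$ with the fine-set convention via ``$\pi$ is $\lambda$-unimodal $\iff$ $\Des(\pi)$ is $\lambda$-unimodal'' is a point the paper states but does not belabor, and it is handled correctly.
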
\begin{proof}
	By Theorem \ref{thm:main}, $\C_n$ is a fine set for the representation $\rho$. By Proposition 
	\ref{prop:basis is fine}, the set of standard Young tableaux that form a basis of the representation $\rho$ is 
	also a fine set for $\rho$. Therefore, the theorem follows from Proposition \ref{prop:cor6.7}. 
\end{proof}

It should be noted that this statement of equi-distribution of descent sets in Theorem \ref{thm:equi} is a special case 
of Theorem 2.2 in \cite{note}, which itself is a reformulation of Theorem 2.1 in \cite{gessel}. 
 
As another consequence of Theorem \ref{thm:main}, we recover Theorem \ref{thm:sn-1cn}, a result of Elizalde 
\cite{sergi} stating that the number of permutations of $\S_{n-1}$ with descent set $D$ is equal to the number of 
permutations of $\C_{n}$ whose descent set is either $D$ or $D\cup[n-1]$. In \cite{sergi}, this is proved directly with a 
complicated bijection. In the proof of Theorem \ref{thm:sn-1cn}, we show that the result also follows from Theorem 
\ref{thm:main} and Propositions \ref{prop:regular} and \ref{prop:sn is fine}. 

For the next proposition, we consider the embedding of $\S_{n-1}$ into $\S_n$ by the following injective mapping. For some $\pi\in \S_{n-1}$, we obtain a permutation $\pi'\in \S_n$ by letting $\pi'_i = \pi_i$ for all $1\leq i\leq n-1$ and $\pi'_n = n$. 

\begin{proposition}
\label{prop:regular}
	The restriction of $\rho$ to $\S_{n-1}$ is isomorphic to the regular representation.
\end{proposition}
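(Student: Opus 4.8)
The plan is to use the standard fact about induced representations: when an induced module $\mathrm{Ind}_H^G(\psi)$ is restricted back down to a subgroup $K$ of $G$, Mackey's formula expresses it in terms of double cosets $K\backslash G / H$. Here $G = \S_n$, $H = \langle (12\cdots n)\rangle$ is cyclic of order $n$, and $K = \S_{n-1}$. Since $|\S_{n-1}| \cdot |H| = (n-1)! \cdot n = n! = |\S_n|$, a dimension count already suggests that $\mathrm{Res}^{\S_n}_{\S_{n-1}} \rho$ has dimension $(n-1)!$, matching the regular representation of $\S_{n-1}$; the content is to show the module structure is actually the regular one.

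First I would identify the permutation model for $\rho = \mathrm{Ind}_H^{\S_n}(\psi)$: as a vector space it has a basis indexed by left cosets $\S_n / H$, and $\S_n$ acts by permuting cosets (twisted by $\psi$). Restricting the action to $\S_{n-1}$, the orbits on $\S_n/H$ correspond to double cosets $\S_{n-1}\backslash \S_n / H$. The key combinatorial step is to show there is exactly one such double coset, i.e. $\S_{n-1} \cdot H = \S_n$. This follows because $H$ acts transitively on $[n]$ (it is generated by an $n$-cycle), so given any $\sigma \in \S_n$, some power $(12\cdots n)^i$ sends $\sigma^{-1}(n)$ to $n$, hence $\sigma (12\cdots n)^i$ fixes $n$ and lies in $\S_{n-1}$; thus $\sigma \in \S_{n-1} H$. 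With a single double coset, Mackey's formula gives $\mathrm{Res}^{\S_n}_{\S_{n-1}}\mathrm{Ind}_H^{\S_n}(\psi) \cong \mathrm{Ind}_{\S_{n-1}\cap H}^{\S_{n-1}}(\psi|_{\S_{n-1}\cap H})$. The final step is to observe $\S_{n-1}\cap H = \{e\}$, since a nonidentity power of an $n$-cycle has no fixed point and so cannot lie in $\S_{n-1}$; therefore the induced module is $\mathrm{Ind}_{\{e\}}^{\S_{n-1}}(\text{trivial})$, which is exactly the regular representation of $\S_{n-1}$.

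I expect the main obstacle to be purely expository: deciding how much of Mackey's formula to state versus cite, and making the bookkeeping of cosets versus double cosets transparent. The three combinatorial facts needed — $\S_{n-1}H = \S_n$, $\S_{n-1}\cap H = \{e\}$, and the dimension match — are each one-line arguments about $n$-cycles acting on $[n]$, so no serious calculation is involved. An alternative, if one prefers to avoid invoking Mackey explicitly, is to argue directly: choose coset representatives for $\S_n/H$ of the form $\{\sigma \in \S_{n-1}\}$ (legitimate since $\S_{n-1}$ is a transversal, by the first fact), and check that $\S_{n-1}$ permutes these basis vectors simply transitively with trivial twist (the twist by $\psi$ disappears because the ``return'' element always lies in $\S_{n-1}\cap H = \{e\}$), which is the defining property of the regular representation.
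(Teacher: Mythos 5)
Your proof is correct, but it takes a different route from the paper. The paper argues directly with the induced module: it chooses the coset representatives of $H$ in $\S_n$ to be the $(n-1)!$ permutations fixing $n$ (checking these represent distinct cosets because a nonidentity element of $H$ has no fixed points), observes that these representatives form a copy of $\S_{n-1}$, and then verifies that when an element of $\S_{n-1}$ acts on a basis vector $\sigma_i v$ the ``return'' element of $H$ is the identity, so the twist by $\psi$ disappears and the action is exactly left multiplication on $\CC\S_{n-1}$ --- i.e.\ the regular representation. That is precisely the ``alternative'' you sketch in your last sentences. Your primary argument instead invokes Mackey's restriction formula, reducing everything to the two facts $\S_{n-1}H=\S_n$ (one double coset, from transitivity of $H$ on $[n]$) and $\S_{n-1}\cap H=\{e\}$ (fixed-point-freeness of nontrivial powers of an $n$-cycle), which yields $\mathrm{Res}^{\S_n}_{\S_{n-1}}\mathrm{Ind}_H^{\S_n}(\psi)\cong\mathrm{Ind}_{\{e\}}^{\S_{n-1}}(\mathbf{1})$, the regular representation. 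Both proofs rest on the same two combinatorial observations; Mackey buys you brevity and a citation to a standard theorem, while the paper's computation is self-contained and makes the trivial twist explicit without any machinery. One cosmetic point: in your transitivity argument you want the power of the $n$-cycle to send $n$ to $\sigma^{-1}(n)$ (so that $\sigma$ composed with it fixes $n$); as written it sends $\sigma^{-1}(n)$ to $n$, which is the inverse power --- immaterial since $H$ is a group, but worth stating with a fixed composition convention.
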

\begin{proof}
	The primitive linear representation $\rho$ of $H$ acts on $\CC v$ by $(12\ldots n) \cdot v \mapsto e^{2\pi i/n} v$. 
	Since $H$ is cyclic, this determines the action. The induced representation $\rho'$ on $\S_n$ is then an action on 
	$\CC\{\sigma_i v\}$ where the $\sigma_i$ are representatives for the distinct cosets of $H \leq \S_n$. For a 
	given element $\pi \in \S_n$ and coset representative $\sigma_i$, there must be some $\tau \in H$ and $j$ so that 
	$\pi\sigma_i = \sigma_j \tau$. The action of the representation is that $\pi \in \S_n$ acts on basis element 
	$\sigma_i v$ by 
	\[
		\pi \cdot \sigma_i v = \sigma_j \rho(\tau) v.
	\]

	We can take the coset representatives $\sigma_i$ to be the elements of $\S_n$ which fix $n$. There are exactly 
	$(n-1)!$ such permutations and $\{\sigma_i H\}$ are all distinct cosets, which follows from the fact that for $i \neq j$, 
	$\sigma_i^{-1}\sigma_j$ fixes $n$ and therefore can only be in $H$ if  $\sigma_i^{-1}\sigma_j$ is the identity. 
	Notice that these coset representatives form a subgroup isomorphic to $\S_{n-1}$. 

	If we take the restriction of $\rho'$ to $\S_{n-1}$, we act on $\CC\{\sigma_i v\}$ by the elements of $\S_{n-1}$, 
	which are exactly the coset representatives. We have that $\sigma_j \cdot \sigma_i v = \sigma_k v$ for some $k$, 
	since $\tau$ is the identity and thus $\rho(\tau) = 1$. If we set $v = 1$, this action is exactly the action by left 
	multiplication on $\CC\S_{n-1}$, which is the regular representation. 
\end{proof}

Though we present a short proof of the following proposition, we note that it also follows from results in \cite{char}.
\begin{proposition}
\label{prop:sn is fine}
	$\S_n$ is a fine set for the regular representation on $\S_n$.
\end{proposition}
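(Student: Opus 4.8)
The plan is to deduce this from the Knuth-class decomposition of $\S_n$, exactly as in the proof of Proposition~\ref{prop:basis is fine}. Unwinding the definition of a fine set, I need to check that $\chi^{\mathrm{reg}}_\lambda = \sum_{\pi \in \S_n^\lambda}(-1)^{|\Des(\pi)\setminus S(\lambda)|}$ for every composition $\lambda$, where $\chi^{\mathrm{reg}}$ is the character of the regular representation on $\S_n$ and $\S_n^\lambda$ is the set of permutations whose descent set is $\lambda$-unimodal. The first observation is that $\S_n^\lambda = \mathcal{U}(\lambda)$, since a permutation is $\lambda$-unimodal precisely when its descent set is.

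Next I would partition $\S_n$ into its Knuth classes. By the RSK correspondence, the Knuth classes of shape $\nu$ are in bijection with their common insertion tableau, so there are exactly $f^\nu := |\SYT(\nu)| = \dim \S^\nu$ of them (each of size $f^\nu$, consistent with $\sum_{\nu \vdash n}(f^\nu)^2 = n!$). By Proposition~\ref{prop:thm2.1}, each Knuth class $\C$ of shape $\nu$ is a fine set for $\S^\nu$; that is, $\sum_{\pi \in \C\cap\mathcal{U}(\lambda)}(-1)^{|\Des(\pi)\setminus S(\lambda)|} = \chi^\nu_\lambda$. Summing this identity over all Knuth classes yields
\[
	\sum_{\pi \in \mathcal{U}(\lambda)}(-1)^{|\Des(\pi)\setminus S(\lambda)|} = \sum_{\nu \vdash n} f^\nu\, \chi^\nu_\lambda .
\]

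To finish, I would recall that the regular representation decomposes as $\bigoplus_{\nu\vdash n}(\S^\nu)^{\oplus f^\nu}$, so its character is $\chi^{\mathrm{reg}}_\lambda = \sum_{\nu\vdash n} f^\nu \chi^\nu_\lambda$, which is exactly the right-hand side above, giving the claim. (Alternatively, one can quote Proposition~\ref{prop:basis is fine} applied to the regular representation directly: via RSK, $\S_n$ is in bijection with pairs $(P,Q)$ of standard Young tableaux of equal shape, with $\Des(\pi)=\Des(Q)$, so $\S_n$ and $B_{\rho^{\mathrm{reg}}}$ have the same multiset of descent sets; and whether a set is fine depends only on this multiset, so $\S_n$ inherits fineness from $B_{\rho^{\mathrm{reg}}}$.) There is no real obstacle here: the only non-bookkeeping ingredients are the classical count of $\dim \S^\nu$ Knuth classes of shape $\nu$ and the fact that RSK carries the descent set of a permutation to that of its recording tableau, both of which are standard and are already used in the proof of Proposition~\ref{prop:basis is fine}.
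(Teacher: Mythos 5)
Your proof is correct, but it is genuinely different from the one in the paper. The paper argues directly and elementarily: the character of the regular representation is $n!$ on the class $(1^n)$ and $0$ elsewhere, the case $\lambda=(1^n)$ is immediate since then $\Des(\pi)\setminus S(\lambda)=\emptyset$ for every $\pi$, and for $\lambda\neq(1^n)$ the signed sum $\sum_{\pi\in\mathcal{U}(\lambda)}(-1)^{|\Des(\pi)\setminus S(\lambda)|}$ is killed by an explicit sign-reversing involution (swap the largest and second largest entries of the first unimodal block of length greater than $1$, which changes $|\Des(\pi)\setminus S(\lambda)|$ by exactly one). You instead partition $\S_n$ into Knuth classes, invoke Proposition~\ref{prop:thm2.1} for each class, and sum, using that there are $f^\nu=|\SYT(\nu)|$ classes of shape $\nu$ and that the regular representation is $\bigoplus_\nu (\S^\nu)^{\oplus f^\nu}$; your parenthetical variant, transferring fineness from $B_\rho$ for the regular representation via RSK and the fact that fineness depends only on the multiset of descent sets, is also sound. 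What your route buys is brevity given machinery already on hand, an explicit parallel with Proposition~\ref{prop:basis is fine}, and a reusable principle (any family descent-equidistributed with a fine set is fine); what the paper's route buys is self-containedness --- it does not lean on Theorem 2.1 of the cited work or on RSK at all --- plus a concrete combinatorial explanation, via the involution, of why the sum vanishes for $\lambda\neq(1^n)$. Both establish the required identity for all compositions $\lambda$, so there is no gap in your argument.
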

\begin{proof}
	It is well-known (for example, see \cite{Sagan}) that the character of the regular representation $\chi^R$ of 
	$\S_n$ takes values 
	\[
		\chi^R(\pi) = \begin{cases} n! & \pi =1 \\ 0 & \text{otherwise.}\end{cases}
	\]
	Therefore, from the definition of a fine set, it suffices to show that
	\begin{equation}
	\label{eqn:reg} 
		\sum_{\pi \in \mathcal{U}(\lambda)} (-1)^{|\Des(\pi) \setminus S(\lambda)|}  = 
		\begin{cases} n! & \lambda = (1^n) \\ 0 & \text{otherwise.}\end{cases}
	\end{equation}
	If $\lambda = 1^n$, then every permutation is $\lambda$-unimodal and each one contributes 1 to the sum since 
	$|\Des(\pi) \setminus S(\lambda)| = |\Des(\pi) \setminus [n-1]| = 0$. Therefore, when $\lambda = (1^n)$, the sum 
	is indeed $n!$.
	
	Now, suppose $\lambda \neq (1^n)$. Consider the following map $\varphi: \mathcal{U}(\lambda) \to \mathcal{U}(\lambda)$. 
	Take $i\geq1$ to be the smallest positive integer such that $\lambda_i>1$. Then $\pi_i\pi_{i+1}\ldots\pi_{i-1+\lambda_i}$ is 
	a unimodal segment of length $\lambda_i>1$. By switching the positions of the largest and second largest elements of this 
	segment, we obtain a unimodal segment with either one more or one less descent. Let $\varphi(\pi)$ be the permutation you 
	obtain by making this change. Then $\varphi$ is an involution on $\mathcal{U}(\lambda)$ which changes 
	$|\Des(\pi) \setminus S(\lambda)|$ by one. Therefore, there must be a bijection between $\lambda$-unimodal permutations 
	with $|\Des(\pi) \setminus S(\lambda)|$ odd and those with $|\Des(\pi) \setminus S(\lambda)|$ even. It follows that the sum 
	must be 0 in this case.
\end{proof}

\begin{theorem}[{\cite{sergi}}]
\label{thm:sn-1cn}
	The number of permutations in $\S_{n-1}$ with descent set $D\subseteq[n-2]$ is equal to the number of permutations in 
	$\C_{n}$ whose descent set is either $D$ or $D\cup[n-1]$.
\end{theorem}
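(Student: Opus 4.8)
The plan is to exhibit $\C_n$ as a \emph{fine set} for the restriction of $\rho$ to $\S_{n-1}$, after modifying the descent statistic so as to forget the last possible descent, and then to compare this fine set with $\S_{n-1}$ itself via the equidistribution principle of Proposition~\ref{prop:cor6.7}. Throughout, write $\operatorname{Res}\rho$ for the restriction of $\rho$ to $\S_{n-1}$.

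The first and main step is to show that $\C_n$, equipped with the descent map $\pi\mapsto\Des(\pi)\cap[n-2]\subseteq[n-2]$, is a fine set for $\operatorname{Res}\rho$ as a representation of $\S_{n-1}$. I would begin with the elementary observation that restriction to $\S_{n-1}$ corresponds to adjoining a fixed point: if $w\in\S_{n-1}$ lies in the conjugacy class of cycle type $\alpha$, then, regarded as a permutation of $[n]$ fixing $n$, it has cycle type $(\alpha,1)$, so $\chi^{\operatorname{Res}\rho}_\alpha=\chi^\rho_{(\alpha,1)}$ for every composition $\alpha$ of $n-1$. Next I would apply Theorem~\ref{thm:main} to the composition $(\alpha,1)$ of $n$. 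Since a segment of length $1$ is automatically unimodal, a cyclic permutation $\pi\in\S_n$ is $(\alpha,1)$-unimodal exactly when its truncation $\pi_1\pi_2\cdots\pi_{n-1}$ is $\alpha$-unimodal, equivalently when $\Des(\pi)\cap[n-2]$ is an $\alpha$-unimodal subset of $[n-2]$; furthermore $S((\alpha,1))=S(\alpha)\cup\{n\}$ agrees with $S(\alpha)$ on $[n-1]$, and since $n-1\in S(\alpha)$ one checks directly that $|\Des(\pi)\setminus S((\alpha,1))|=|(\Des(\pi)\cap[n-2])\setminus S(\alpha)|$. Combining these with Theorem~\ref{thm:main} yields
\begin{align*}
	\chi^{\operatorname{Res}\rho}_\alpha=\chi^\rho_{(\alpha,1)}
	&=\sum_{\pi\in\C((\alpha,1))}(-1)^{|\Des(\pi)\setminus S((\alpha,1))|}\\
	&=\sum_{\substack{\pi\in\C_n\\ \Des(\pi)\cap[n-2]\ \alpha\text{-unimodal}}}(-1)^{|(\Des(\pi)\cap[n-2])\setminus S(\alpha)|},
\end{align*}
which is precisely the assertion that $\C_n$, with the descent map $\pi\mapsto\Des(\pi)\cap[n-2]$, is a fine set for $\operatorname{Res}\rho$.

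To finish, I would invoke Proposition~\ref{prop:regular}, which identifies $\operatorname{Res}\rho$ with the regular representation of $\S_{n-1}$, together with Proposition~\ref{prop:sn is fine} (applied with $n-1$ in place of $n$), which says that $\S_{n-1}$ with its ordinary descent map is also a fine set for that regular representation. Proposition~\ref{prop:cor6.7} then forces the two descent-set distributions to coincide:
\[
	\bigl|\{w\in\S_{n-1}:\Des(w)=D\}\bigr|=\bigl|\{\pi\in\C_n:\Des(\pi)\cap[n-2]=D\}\bigr|
\]
for every $D\subseteq[n-2]$. Since $\Des(\pi)\subseteq[n-1]$, the condition $\Des(\pi)\cap[n-2]=D$ holds exactly when $\Des(\pi)=D$ or $\Des(\pi)=D\cup\{n-1\}$, and the theorem follows.

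I expect the main obstacle to be the verification in the second paragraph that $\bigl(\C_n,\ \Des(\cdot)\cap[n-2]\bigr)$ is genuinely a fine set for $\operatorname{Res}\rho$: this rests on three bookkeeping points that must be checked carefully — that appending a part of size $1$ to $\alpha$ converts $\alpha$-unimodality of the truncated word into $(\alpha,1)$-unimodality of the full cyclic permutation; that $S((\alpha,1))$ and $S(\alpha)$ interact identically with any descent set contained in $[n-1]$ because $n-1$ is already a partial sum of $\alpha$; and hence that the signs in the two sums match term by term. Everything else is a direct appeal to Theorem~\ref{thm:main} and Propositions~\ref{prop:regular}, \ref{prop:sn is fine}, and \ref{prop:cor6.7}.
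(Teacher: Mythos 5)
Your argument is correct and is essentially the paper's own proof: your descent map $\Des(\cdot)\cap[n-2]$ is the paper's $r$-descent set $\Des^r$, your application of Theorem~\ref{thm:main} to the composition $(\alpha,1)$ matches the paper's choice $\lambda=(\lambda^r,1)$, and the conclusion is drawn from Propositions~\ref{prop:regular}, \ref{prop:sn is fine} and \ref{prop:cor6.7} in the same way. If anything, you spell out the bookkeeping (why $S((\alpha,1))$ and the signs behave as claimed) more explicitly than the paper does.
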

\begin{proof}
	By Proposition \ref{prop:sn is fine}, we know that $\S_{n-1}$ is a fine set for the regular representation of $\S_{n-1}$.  
	Denote the restriction of the character $\chi$ to $\S_{n-1}$ by $\chi^r$. Consider the injection $\iota: \S_{n-1} \to \S_n$ 
	sending $\tau \in \S_{n-1}$ to $\pi\in\S_n$ defined by $\pi(i) = \tau(i)$ for $1\leq i\leq n-1$ and $\pi(n) = n$. This allows us 
	to think of $\S_{n-1}$ as a subgroup of $\S_n$. We have $\chi^r(\tau) = \chi(\tau)$ when $\tau \in \S_{n-1} \leq \S_n$, 
	and $\chi^r(\tau) = 0$ when $\tau \in\S_n\setminus \S_{n-1}$. Recall that by Theorem \ref{thm:main}, 
	\[
		\chi_\lambda = \sum_{\pi \in \C_\lambda} (-1)^{|\Des(\pi) \setminus S(\lambda)|}.
	\]
	Suppose $\lambda^r$ is a composition of $n-1$ and $\lambda = (\lambda_1^r, \lambda_2^r, \ldots, \lambda_{k-1}^r, 1)$. 
	Additionally, for $\pi \in \S_n$, let $\Des^r(\pi)$ denote the \emph{$r$-descent set} defined to be $\Des(\pi)\setminus\{n-1\}$. 
	Then any element of $\C_\lambda$ is $\lambda^r$-unimodal with respect to $\Des^r$, that is to say, the $r$-descent set of 
	an element of $\C_\lambda$ is $\lambda^r$-unimodal as a set. It follows that
	\[
		\chi_{\lambda^r}^r = \sum_{\pi \in \C_\lambda} (-1)^{|\Des^r(\pi) \setminus S(\lambda^r)|}.
	\]
	Therefore, it follows that $\C_n$ (equipped with the $r$-descent set) is a fine set for the restricted representation. 
	By Proposition \ref{prop:cor6.7}, it follows that for a given $D\subseteq[n-2]$, the number of permutations in 
	$\S_{n-1}$ with descent set $D$ is equal to the number of permutations in $\C_n$ with $r$-descents set $D$, 
	from which the theorem follows.
\end{proof}

%

\subsection*{Acknowledgements} 

The author would like to thank Yuval Roichman, Sergi Elizalde, and Zajj Daugherty for helpful discussions. The author would also like to thank the referee for helpful comments and suggestions. 

\bibliographystyle{amsplain}
\bibliography{lambdabib}

\end{document}